\newcommand{\hquotient}{\mathbin{\!\sslash\!}}
\newcommand{\cal}{\mathcal}
\newcommand{\back}{\overleftarrow}
\newcommand{\forth}{\overrightarrow}
\newcommand{\into}{\hookrightarrow}
\newcommand{\mpath}{\cal P_{\!\!\!\!{}_M}}
\newcommand{\mloop}{\Omega_{\!{}_M}\!}
\def\cl{\mathfrak c}
\def\op{\mathfrak o}
\def\ac{{\sf Act}}
\def\SC{\mathcal{SC}}
\def\acts{\mathrel{\reflectbox{$\curvearrowleft$}}}
\def\racts{\mathrel{\reflectbox{$\curvearrowright$}}}
\def\acinfty{\makebox{  $\acts  \hskip -13.2pt \raisebox{1.5pt}{${}_{{}_{\infty}}$} \,$ }}
\def\racinfty{\makebox{ $\racts \hskip -11.5pt \raisebox{1.5pt}{${}_{{}_{\infty}}$} \,$ }}
\newtheorem{thm}{Theorem}[section]
\newtheorem*{thm*}{Theorem}
\newtheorem{defi}[thm]{Definition}
\newtheorem{obs}[thm]{Observation}
\newtheorem{lem}[thm]{Lemma}
\newtheorem{prop}[thm]{Proposition}	
\newtheorem*{prop*}{Proposition}	
\newtheorem{coro}[thm]{Corollary}
\newtheorem*{nota}{Notation}
\begin{document}

\title{$A_\infty$-actions and Recognition of Relative Loop Spaces}
\author{Eduardo Hoefel}
\address{Universidade Federal do Paran\'a, Departamento de Matem\'atica C.P. 019081, 81531-990 \mbox{Curitiba}, PR - Brazil }
\email{hoefel@ufpr.br}
\author{Muriel Livernet}
\address{Universit\'e Paris 13, CNRS, UMR 7539 LAGA, 99 avenue
  Jean-Baptiste Cl\'ement, 93430 
  \mbox{Villetaneuse}, France}
\email{livernet@math.univ-paris13.fr}
\author{Jim Stasheff}
\address{Emeritus at the Mathematics Department of the University of North
Carolina at \mbox{Chapel} Hill – UNC-CH, Chapel Hill, NC 27599-3250}
\email{jds@math.upenn.edu}
\keywords{Loop spaces, Homotopy Actions, Operads}
\subjclass[2000]{55P48, 55P35, 55R15}
\date{\today}

\begin{abstract}
We show that relative loop spaces are recognized by $A_\infty$-actions. A certain version of the 2-sided bar construction is used to prove such recognition theorem. The operad $\ac_\infty$ of $A_\infty$-actions is presented in terms of the Boardman-Vogt resolution of the operad $\ac$. We exhibit an operad homotopy equivalence between such resolution and the $1$-dimensional Swiss-cheese operad $\mathcal{SC}_1$.
\end{abstract}

\maketitle


\section{Introduction}
Given an $A_\infty$-space $X$ and a topological space $P$, an $A_\infty$-action of $X$ on $P$ is, by definition, an $A_\infty$-map $X \to {\rm End}(P)$. In this paper, we show that any $A_\infty$-action is weakly equivalent to the $A_\infty$-action of a loop space on a relative loop space. It is also shown that the weak equivalence is $A_\infty$-equivariant. 

The history of the notion of homotopy actions goes somewhat like the following.
Starting in 1895 with Poincar\'e's definition of the fundamental group of a topological space \cite{poincare}, the concept of homotopy associativity became well established, but the higher homotopies had to wait some six decades to be recognized \cite{sugawara:h}. Meanwhile, the corresponding notion of a homotopy action (or action up to homotopy) was at least implicit in the action of the fundamental group $\pi_1(X)$ on the set $\pi_1(X,A)$
for $A\subset X$. The next step was taken by Hilton\footnote{
The third author learned it as a graduate student from Hilton's {\it Introduction to Homotopy Theory} \cite{hilton:book}, the earliest textbook on the topic, published in 1953.}  \cite{hilton:book} in considering the long exact sequence associated to a fibration $F \to E \to B$,
$$\cdots\to \pi_n(F)\to \pi_n(E)\to \pi_n(B)\to \pi_{n-1}(F)\to \cdots$$
ending with $$\cdots \to \pi_1(B)\to \pi_0(F)\to \pi_0(E)\to \pi_0(B).$$
Of course, exactness is very weak at the end since the last three are in general only sets, but
exactness at $ \pi_0(F)$ is in terms of the action of $ \pi_1(B)$ on $ \pi_0(F).$  This passage to homotopy classes obscures the `action' of $\Omega B$ on $F$.  Initially, this was referred to as a \emph{homotopy action}, meaning only that $\lambda(\mu f)$ was homotopic to $(\lambda\mu)f$  for $f\in F$ and $\lambda, \mu \in \Omega B.$

If $X$ is a topological monoid, Nowlan \cite{nowlan}
developed the notion of an $A_\infty$-action of $X$ fibrewise on a fibration $p:P\to B$ and hence of an 
$A_\infty$-principal fibration when $X$ is the fiber of $p$. Nowlan's results were generalized to the case in which 
$X$ is itself an $A_\infty$-space by Iwase and Mimura \cite{iwase-mimura}. Given an $A_\infty$-space $X$ and a map $h : X \to P$, they introduced the notion of $A_\infty$-action along $h$. 

In the present paper, the map $h$ as well as its higher analogous maps $h_n : X^{\times n} \to P$ are part of an $\ac_\infty$-algebra structure on the pair $(X, P)$, where the operad $\ac_\infty$ is defined in section \ref{sec:w(act)}. We also show that the structure of an $\ac_\infty$-algebra on the pair $(X, P)$ is naturally equivalent to the existence of a $A_\infty$-map $X \to {\rm End}(P)$.

The homotopy orbit space $P \hquotient X$, defined originally by Iwase and Mimura, will be presented in terms of the 2-sided bar construction and will play a crucial role in the present paper. Our main result, Theorem \ref{thm:mainresult}, says that 
any $A_\infty$-action is $A_\infty$-equivariantly weakly equivalent to a relative loop space action, thus providing a natural rectification result for $A_\infty$-actions. The concepts of $A_\infty$-action and $A_\infty$-equivariance are treated in detail in section \ref{sec:recog}. Rectification of homotopy algebras in the context of colored operads has been studied in more generality by Berger and Moerdijk \cite{BerMoe07}. 

The operad $\ac_\infty$ is defined in section \ref{shaction} in terms of the Boardman-Vogt resolution of $\ac$. In section \ref{sec:swiss_cheese} we discuss in the details the relations between $A_\infty$-actions and the 1-dimensional Swiss-cheese operad $\SC_1$.  The $n$-dimensional Swiss-cheese operad $\SC_n$ will be used, in a sequel to this paper, to prove corresponding results for relative $n$-loop spaces with $n \geqslant 1$.

\subsection{Conventions and Notation} In this paper, all spaces are assumed to be compactly generated, Hausdorff and to have non-degenerate base points. All products and spaces of maps are taken with respect to the compactly generated topology. Throughout this paper, a \emph{weak equivalence} will always mean a weak homotopy equivalence between topological spaces. 
All the operads considered in this paper are non-symmetric.  Concerning colored operads, we will follow the definitions and notation of \cite{BerMoe07}. Most of the colored operads involved are $2$-colored, i.e., the set of colors $\{ c_1, c_2 \}$ has two elements. An algebra over a $2$-colored operad consists of a pair of spaces $(E_{c_1},E_{c_2})$. Such pairs are not to be confused with the topological pairs $(Y,B)$ where $B$ is a subspace of $Y$. Whenever it is clear from the context that 
the color $c_1$ comes before the color $c_2$, given any pair $(X,P)$ one can assume that $X$ is the space of color $c_1$ while $P$ has color $c_2$. For simplicity we will use the notation: 
\begin{equation*}
 x_{i...j} = (x_i, \dots, x_j) \in X^{\times (j - i + 1)}
\end{equation*}  for any set $X$ and any integers $i < j$.

\subsection*{Acknowledgments} The first author is supported by CNPq (grant 237798/2012-3) for a long term visit to Universit\'e Paris 13 and is grateful to its hospitality. This work is partially supported by CAPES/COFECUB Research Project 763/13 ``Factorization Algebras in Mathematical Physics and Algebraic Topology'' coordinated by the second author. The authors are grateful to the Isaac Newton Institute and to the organizers of the ``Grothendieck-Teichm\"uller Groups, Deformation and Operads'' Programme during which a great part of this work was accomplished.


\section{Recognition of Relative Loop Spaces}\label{sec:recog}

In this section we introduce the notion of $A_\infty$-actions and of $A_\infty$-equivariance. We will prove a recognition theorem according to which any $A_\infty$-action is weakly equivalent to the action of a loop space on a relative loop space in an $A_\infty$-equivariant way. The proof involves the 2-sided bar construction for $A_\infty$-actions.

\subsection{$A_\infty$-spaces and $A_\infty$-maps} \label{sub:ainftymaps}
We begin by recalling the definitions of $A_\infty$-spaces and $A_\infty$-maps. When studying $A_\infty$-maps 
between $A_\infty$-spaces $X$ and $Y$, we will concentrate in the case where $Y$ is a topological monoid with unit. 

We will recall the definition of {\it metric trees}, i.e., trees such that each internal edge has a length in $[0,1]$.  It will be convenient to first introduce the {\it edge-labeled trees} as an assignment of a number to each internal edge. Then the space of metric trees is naturally defined as a quotient of the space of edge-labeled trees.
\subsubsection{Metric trees and edge-labeled trees} 
Let us consider planar oriented finite trees such that some edges are external, i.e., half-open segments attached to a single vertex. Hence our trees are non-compact, albeit finite. We assume that each vertex has at least two incoming edges and that the trees are rooted in the sense that there is only one external edge with an outward orientation. Any external edges that is not the root is inward oriented and will be called a leaf. 

For each tree $T$, its number of leaves will be denoted by $|T|$. The set of internal edges in $T$ will be denoted by $i(T)$ while its cardinality is denoted by $|i(T)|$. 

\begin{defi}
 A tree $T$ endowed with a function $\ell : i(T) \to [0,1]$ will be called an \emph{edge-labeled} tree. The function $\ell$ will be called an \emph{edge-labeling} on $T$.
\end{defi}

For a fixed tree $T$, the space of edge-labellings on $T$ can be identified with $[0,1]^{\times |i(T)|}$. We denote by $\cal T_n$ the space of edge-labeled trees with $n$ leaves, it is topologized as the disjoint union: 
\[ \cal T_n = \bigsqcup_{|T|=n}   [0,1]^{\times |i(T)|} \quad . \] 

Given two edge-labeled trees $T$ and $S$ and $r\in [0,1]$, the edge-labeled tree $T\circ_i^r S$ is obtained by grafting the root
of the tree $S$ on the $i$th leaf  of  $T$  and by assigning the length $r$ to the newly created edge. This operation of 
grafting trees  satisfies the usual operadic relations:
\begin{align}
 U\circ_i^q(T\circ_j^r S)=&(U\circ_i^q T)\circ_{j+i-1}^r S,     & 1\leqslant i\leqslant |U|, 1\leqslant j\leqslant |T|, \label{eq:assoc}\\
 (U\circ_i^q T)\circ_j^r S=&(U\circ_j^r S)\circ_{i+|S|-1}^q T,  & 1\leqslant j<i\leqslant |U|. \label{eq:comm}
\end{align}

In order to define metric trees, we introduce a natural identification $\sim$ on $\cal T_n$ that is defined as follows. If some internal edge is labelled by $0$, then the tree is identified to the tree obtained by collapsing that edge into a single vertex.

\begin{defi}[metric trees]
We define the space of metric trees with $n$-leaves as the quotient $\cal T_n\!\mathrel{/}\sim$, where $\sim$ is the above defined relation. The topology on $\cal T_n\!\mathrel{/}\sim$ is the quotient topology. Any element of $\cal T_n\!\mathrel{/}\sim$ will be called a \emph{metric tree} with $n$-leaves. 
\end{defi}

\subsubsection{$A_\infty$-spaces} 
Let us recall the definition of $A_\infty$-spaces as algebras over the operad $\mathcal K = \{ K_n \}_{n \geqslant 0}$ defined 
by the associahedra. We will use the Boardman and Vogt \cite{BoaVog73} parametrization of $K_n$ by planar metric trees, i.e., we will think of the associahedra $K_n$ as the space $\cal T_n\!\mathrel{/}\sim$ of metric trees with $n$-leaves.

The grafting operation on $\cal T_n$ is well defined on $K_n = \cal T_n\!\mathrel{/}\sim$ and it induces the operad structure of $\cal K$:
\begin{equation*}
\circ_i : K_n \times K_m \to K_{n+m-1}, \quad 1 \leqslant i \leqslant n. 
\end{equation*}
where $T \circ_i S = T \circ^1_i S$. 

The components $K_1$ and $K_0$ are defined as one point spaces. The only point of $K_1$ corresponds to the tree with only one edge and no vertices. It plays the role of the identity in the operad $\cal K$. The only point $\delta_0$ of $K_0$ induces the degeneracy maps $s_i$ in the sense that grafting $\delta_0$ to the $i$th leaf of a tree in $K_n$ is equal to erasing that leaf. 

There is one subtlety in the definition of the degeneracy maps $s_i$ that can be solved as follows.
Suppose that the metric tree $T$ has some vertex with exactly two incoming edges and one of those edges is the $i$th leaf. After erasing that leaf, the adjacent vertex will only have one incoming edge.  For the map $s_i$ to be well defined, we need to include one further relation. The adjacent vertex having only one incoming edge will be erased and the resulting edge will have length equal to the maximum of the lengths of the previous adjacent edges: 
\begin{equation*}
\raisebox{22pt}{
\xymatrix@R=18pt{ 
 \                                                         \\
 *[o]-<5pt>{\bullet}\ar@{-}[u]^(.4){\alpha}\ar@{-}[d]_(.6){\beta}  \\  
 \             }
               }
 \quad \sim \;
 \raisebox{22pt}{
 \xymatrix@R=35pt{ 
 \                        \\  
 \ar@{-}[u]^(.5){{\rm max}\{\alpha,\beta\}}     }
                 } \ .
\end{equation*}
Now we can consider that the degeneracy maps 
\begin{equation*}
 s_i : K_n \to K_{n-1}, \quad 1 \leqslant i \leqslant n, 
\end{equation*}
given by $s_i(\tau) = \tau \circ_i \delta_0$, are well defined. 

There is another operation on edge-labeled trees that will be useful in this paper. Such operations will be called {\it shift} and {\it deshift}. The shift (resp. deshift) operation will be denoted by $T \mapsto \forth{T}$ (resp. $T \mapsto \back{T}$) and is characterized by the following properties for any trees $S$ and $T$ and for any $r \in [0,1]$: 
\begin{equation}\label{def:shift}
\forth{\delta_k} = \delta_k; \quad \   
\forth{S \circ^r_i T} = \forth{S} \circ^r_{i+1} T; \quad \ 
\forth{S \circ^r_{|S|} T} = \forth{T} \circ^r_{1} \forth{S}, 
\hskip 1.5em \mbox{ for } 1 \leqslant i < |S|.  
\end{equation} 
\begin{equation}\label{def:deshift}
\back{\delta_k} = \delta_k; \quad \  
\back{S \circ^r_j T} = \back{S} \circ^r_{j-1} T; \quad \  
\back{S \circ^r_{1} T} = \back{T} \circ^r_{|T|} \back{S},
\hskip 1.5em \mbox{ for } 1 < j \leqslant |S|.  
\end{equation}

One can check that this definition does not depend on the decomposition of a tree as in (\ref{eq:assoc}) and (\ref{eq:comm}).
  The shift $\forth{T}$ can be understood by reversing the orientation of all edges joining the last leaf to the root. On the other hand, 
  the deshift $\back{T}$ can be understood by reversing the orientation of all edges joining the first leaf to the root.  Notice that a shift followed by a deshift is the identity.

\begin{defi}[$A_\infty$-spaces]
An $A_\infty$-space is a topological space $X$ endowed with the structure of an algebra over the operad $\cal K = \{ K_n \}_{n \geqslant 0}$. 
\end{defi}
A $\cal K$-algebra structure on $X$ is 
equivalent to the existence of a family of maps
\begin{equation*}
  M_n : K_n \times X^n \to X, \quad n \geqslant 0, 
\end{equation*}
such that for any $\rho \in K_n$ and $\tau \in K_m$, the following relation holds:
\begin{equation*}
  M_{n+m-1}(\rho \circ_i \tau;x_1,..., x_{n+m-1}) = 
  M_n(\rho; x_1,...,M_m(\tau;x_i,..., x_{i+m-1}),..., x_{n+m-1}). 
\end{equation*}
For $\delta_0 \in K_0$, since the degeneracies are given by $s_i(\rho) = \rho \circ_i \delta_0$, we have in particular:  
\begin{equation*}
  M_{n-1}(s_i(\rho);x_1,\dots , x_{n-1}) = 
  M_n(\rho; x_1,\dots,x_{i-1},e_{{}_X},x_i,\dots, x_{n-1}), 
\end{equation*}
where $1 \leqslant i \leqslant n$ and $e_{{}_X} = M_0(\delta_0)$ is the only point of $X$ in the image of the map $M_0 : K_0 \to X$. 
The element $e_{{}_X}$ will be called the unit of the $A_\infty$-structure on $X$.

In the present paper, unless otherwise stated, all $A_\infty$-spaces are assumed to have a strict unit as above. For $A_\infty$-spaces with homotopy units, we refer the reader to \cite{Muro2011,MuroTonks2011,Iwase2012}. 

\subsubsection{$A_\infty$-maps}
Later in this section we will recall the fact that spaces admitting an $A_\infty$-structure are precisely those spaces having the weak homotopy type of loop spaces. Similarly, maps between $A_\infty$-spaces having the weak homotopy type of loop maps are called $A_\infty$-maps (see \cite{jds:lnm}, Theorem 8.12). 

In the next definition we recall the notion of $A_\infty$-maps $f : X \to Y$ where $X$ is an $A_\infty$-space and $Y$ is a topological monoid (i.e., an associative $H$-space with unit $e_{{}_Y}$). It is possible to define $A_\infty$-maps between two $A_\infty$-spaces in full generality by taking advantage of the $W$-construction introduced by Boardman and Vogt. For any operad $\mathcal P$, let $\mathcal P_{b \to w}$ be the operad whose algebras are pairs of $\mathcal P$-algebras with a morphism between them (for details, see: \cite{MSS02, Markl04}). An $A_\infty$-map between two $A_\infty$-spaces is then defined as the structure of an algebra over the operad $W({\sf Ass}_{b \to w})$ compatible with the given $A_\infty$-structures.  

\begin{defi}[$A_\infty$-maps]
Let $Y$ be a monoid and $X$ an $A_\infty$-space. We say that a map $f:X \to Y$ is an $A_\infty$-map if there exists maps $\{ f_i : K_{i+1} \times X^i \to Y \}_{i \geqslant 1}$ such that $f(x) = f_1(\delta_2;x)$, for all $x \in X$ and: 
\begin{align}
f_k(\rho \circ_{j+1} \tau; x_{1 \dots k}) = & f_{|\rho|-1}(\rho;x_{1 \dots j-1}, M_{|\tau|}(\tau;x_{j \dots j+|\tau|-1}), x_{j+|\tau| \dots k}), \label{eq:amap1}  \\
f_k(\rho \circ_1 \tau; x_{1 \dots k}) = & f_{|\tau|-1}(\tau;x_{1 \dots |\tau|-1}) \cdot f_{|\rho|-1}(\rho;x_{|\tau| \dots k}), \label{eq:amap2}
\end{align}
for $1\leqslant j < |\rho|$, where 
$\rho$ and $\tau$ are metric trees such $|\rho|, |\tau|\geqslant 2$ and $k = |\rho| + |\tau| - 2$.

If moreover the family satisfies: 
\[ f_{n - 1}(s_{j+1}(\rho);x_1, \dots, x_{n-1}) = 
f_n(\rho;x_1, \dots x_{j-1},e_{{}_X},x_j, \dots, x_{n-1}),\ \mbox{ for } n \geqslant 1, \] 
where $f_0(\delta_1) = e_{{}_Y}$, then the $A_\infty$-map is called \emph{unital}.
\end{defi}

Notice that for unital $A_\infty$-maps we have: $f(e_{{}_X}) = e_{{}_Y}$. There is a sequence of polyhedra parameterizing the higher homotopies of an $A_\infty$-map, such polyhedra are known as multiplihedra. The correspondence between the multiplihedra and $W({\sf Ass}_{b \to w})$ is not immediate and involves the so called {\it level trees} introduced in \cite{BoaVog73}. A nice description of such relations, can be found in \cite{forcey,tsutaya}. The multiplihedra can also be obtained as a compactification of a certain class of configuration spaces, as shown by Merkulov in \cite{merkulov11}. 

Given any topological monoid $M$, in the following definition the monoid structure given by the opposite product defined on the space $M$ will be denoted by: $M^{{\rm op}}$.
  
\begin{defi}[$A_\infty$-action]
Given an $A_\infty$-space $X$ and a space $P$, a left $A_\infty$-action of $X$ on $P$ is defined as a unital $A_\infty$-map $X \to {\rm End}(P)$. On the other hand, a right $A_\infty$-action of $X$ on a space $Q$ is defined as a unital $A_\infty$-map $X \to {\rm End}(Q)^{{\rm op}}$.
\end{defi}

For any $A_\infty$-space, if $P$ admits a left (resp. right) $A_\infty$-action, we will say simply that $P$ is a left (resp. right) $X$-space. The structure of left $X$-space on $P$ will be denoted by $X \acinfty P$, while right $X$-space structures will be denoted by $Q \racinfty X$.

\begin{prop} \label{prop:A_infty_action_maps} Let $X$ be an $A_\infty$-space and $P$ a topological space. An $A_\infty$-action structure $X \acinfty P$ is equivalent to a family of maps 
$\{ N_i : K_i \times X^{i-1} \times P \to P \}_{i \geqslant 1}$
such that $N_1 = {\rm Id}_P$ and the following conditions are satisfied:
\begin{enumerate}[i)]
 \item $N_k(\rho \circ_j \tau;x_{1, \dots, k-1},p) = N_{|\rho|}(\rho;x_{1, \dots, j-1},M_{|\tau|}(\tau;x_{j, \dots, j+|\tau|-1}),
 x_{j+|\tau|, \dots ,k-1},p)$, \\ for $1 \leqslant j \leqslant |\rho| - 1$, $|\tau|\geqslant 0$; \\[-10pt] 
 \item $N_k(\rho \circ_{|\rho|} \tau;x_1, \dots, x_{k-1},p) = N_{|\rho|}(\rho;x_1, \dots, x_{|\rho|-1},N_{|\tau|}(\tau;x_{|\rho|}, 
       \dots, x_{k-1},p))$,
 \end{enumerate}
where $\rho$ and $\tau$ are arbitrary metric trees and $k = |\rho| + |\tau| - 1$.
\end{prop}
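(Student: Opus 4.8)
The plan is to show that the two structures are equivalent by exhibiting a dictionary between the defining data of an $A_\infty$-action $f\colon X\to{\rm End}(P)$ and the family $\{N_i\}_{i\geqslant 1}$, and then checking that the relations defining a unital $A_\infty$-map correspond precisely to conditions i) and ii). Recall that an $A_\infty$-action consists of maps $f_i\colon K_{i+1}\times X^i\to{\rm End}(P)$ satisfying \eqref{eq:amap1} and \eqref{eq:amap2}, together with the unitality relation. First I would set
\[
N_i(\tau;x_{1\dots i-1},p) := f_{i-1}(\tau;x_{1\dots i-1})(p)
\]
for $i\geqslant 2$, with $\tau\in K_i$, and $N_1={\rm Id}_P$ corresponding to $f_0(\delta_1)=e_{{\rm End}(P)}={\rm Id}_P$; conversely, given $\{N_i\}$, one recovers $f_{i-1}(\tau;x_{1\dots i-1})$ as the map $p\mapsto N_i(\tau;x_{1\dots i-1},p)$. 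The adaptation of indices $K_{i+1}\leftrightarrow K_i$ (evaluated on $i-1$ copies of $X$ and one copy of $P$) is just bookkeeping, reflecting that the $P$-slot behaves like the last leaf of the tree.

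The key step is to translate the two $A_\infty$-map relations. Relation \eqref{eq:amap1}, which concerns grafting $\tau$ onto the $(j{+}1)$st leaf of $\rho$ for $1\leqslant j<|\rho|$, i.e., onto a leaf other than the root-most one, feeds an $M$-multiplication into an $X$-argument of $f$; under the dictionary this becomes exactly condition i), where $\tau$ is grafted at leaf $j$ with $1\leqslant j\leqslant|\rho|-1$ and $M_{|\tau|}$ appears inside an $X$-slot of $N$. Relation \eqref{eq:amap2}, grafting $\tau$ onto leaf $1$ of $\rho$, produces the product in ${\rm End}(P)$, namely composition of endomorphisms; since composition in ${\rm End}(P)$ is $(g\cdot h)(p)=g(h(p))$ (or its opposite, depending on the convention in the definition of left action, which I would fix to match the $|\rho|$-labelling), evaluating on $p$ turns the product $f_{|\tau|-1}(\tau)\cdot f_{|\rho|-1}(\rho)$ into the nested expression $N_{|\rho|}(\rho;\dots,N_{|\tau|}(\tau;\dots,p))$ of condition ii) — here the relevant leaf in the $N$-picture is the last one, $|\rho|$, because the shift/deshift discussion shows that grafting on leaf $1$ in the $f$-convention and grafting on the last leaf in the $N$-convention are carried to one another. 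The unitality relation for the $A_\infty$-map, involving the degeneracies $s_{j+1}$ and insertion of $e_{{}_X}$, together with $f_0(\delta_1)=e_{{}_Y}$, corresponds under the dictionary to the compatibility of the $N_i$ with degeneracies and to $N_1={\rm Id}_P$; I would note that this is automatically encoded since $N_i$ is defined on $K_i$, on which the degeneracy relations already hold, so no extra condition need be listed in the proposition.

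The main obstacle, and the only point requiring genuine care rather than bookkeeping, is getting the index shifts and the leaf-relabelling exactly right — in particular verifying that ``grafting on the first leaf'' in \eqref{eq:amap2} matches ``grafting on the last leaf $|\rho|$'' in condition ii) rather than on some other leaf, and that the range $1\leqslant j<|\rho|$ in \eqref{eq:amap1} matches $1\leqslant j\leqslant|\rho|-1$ with $|\tau|\geqslant 0$ in condition i) (note condition i) explicitly allows $|\tau|=0$, i.e., insertion of $\delta_0$, which is where the degeneracy/unit behaviour is absorbed). I would handle this by invoking the shift and deshift operations of \eqref{def:shift}–\eqref{def:deshift}: identifying the $P$-slot with the root-most leaf of the tree and using $\back{(\,\cdot\,)}$ to move between the ``$f$-indexing'' (product appears when grafting on leaf $1$) and the ``$N$-indexing'' (action-within-action appears when grafting on leaf $|\rho|$) makes the correspondence transparent and confirms it is independent of the chosen decomposition of a tree as in \eqref{eq:assoc}–\eqref{eq:comm}. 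Once the dictionary is pinned down, the verification that each relation on one side is equivalent to the corresponding relation on the other is a direct substitution, so the proposition follows.
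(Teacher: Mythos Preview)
Your approach is the same as the paper's, and you correctly identify the shift/deshift as the key device. However, the explicit dictionary you write down, $N_i(\tau;x_{1\dots i-1},p) := f_{i-1}(\tau;x_{1\dots i-1})(p)$, is wrong as stated: the deshift must appear \emph{in the definition of the correspondence}, not merely as a verification tool afterwards. The paper sets
\[
f_{n-1}(T;x_{1,\dots,n-1})(p) \;=\; N_n(\back{T};x_{1,\dots,n-1},p),
\]
and this is essential. With your naive identification, translating \eqref{eq:amap2} (grafting at leaf $1$, giving the product in ${\rm End}(P)$) yields
\[
N_{k+1}(\rho\circ_1\tau;x_{1\dots k},p)=N_{|\tau|}\bigl(\tau;x_{1\dots|\tau|-1},\,N_{|\rho|}(\rho;x_{|\tau|\dots k},p)\bigr),
\]
which is \emph{not} condition ii): the grafting position is $1$ rather than $|\rho|$, and the roles of $\rho$ and $\tau$ in the nesting are swapped. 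Likewise, \eqref{eq:amap1} under your dictionary covers grafting positions $2,\dots,|\rho|$, whereas condition i) concerns positions $1,\dots,|\rho|-1$. It is precisely the identities $\back{S\circ_1 T}=\back{T}\circ_{|T|}\back{S}$ and $\back{S\circ_j T}=\back{S}\circ_{j-1}T$ for $j>1$ from \eqref{def:deshift} that repair both mismatches simultaneously, but only if $\back{(\,\cdot\,)}$ is applied to the tree in the dictionary itself. Once you build the deshift into the correspondence from the outset, your outline goes through and coincides with the paper's argument (which then leaves the converse, via the shift $\forth{T}$, to the reader).
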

\begin{proof}
Assuming that a family of maps $N_i$ is given as above, we use the deshift operation (\ref{def:deshift}) on trees to define the unital $A_\infty$-map from $X$ to ${\rm End}(P)$: 
\[ 
f_{n-1}(T;x_{1,...,n-1})(p) = N_n(\back{T};x_{1,...,n-1},p),  \quad \ \mbox{ for } n \geqslant 2. 
\]

To see that $f$ is unital, we take $T = \rho \circ_i \tau$ with $|\tau| = 0$ and use condition {\it i)}. For $n=1$ we observe that $p = N_1(\delta_1;p) = N_2(\delta_2;e_{{}_X},p)$. Hence:
\[
f(e_{{}_X})(p) = f_1(\delta_2;e_{{}_X})(p) = N_2(\delta_2;e_{{}_X},p) = p
\]
in other words: $f(e_{{}_X}) = {\rm Id}_P$. 

Let us check the compatibility with the operadic composition $\circ_1$:
\begin{multline*}
 f_{n-1}(T \circ_1 S ;x_{1,...,n-1})(p) = N_n(\back{T \circ_1 S} ;x_{1,...,n-1},p) = \\ = 
 N_n(\back{S} \circ_{|S|} \back{T} ;x_{1,...,n-1},p) =
 N_{|S|}(\back{S};x_{1,...,|S|-1}, N_{|T|}(\back{T} ;x_{|S|,...,n-1},p)) = \\ =
 f_{|S|-1}(S;x_{1,...,|S|-1}) f_{|T|-1}(T;x_{|S|,...,n-1})(p),
\end{multline*}
where $n = |S| + |T| - 1$. Checking the compatibility for the others $\circ_i{}'s$ is similar. 

We left to the reader the proof of the converse by using the shift $\forth{T}$.
\end{proof}

\begin{obs}
Analogous results hold for right $A_\infty$-actions. For $\tau = \delta_0 \in K_0$, relation {\it i)} is:
$
N_{r-1}(s_j(\rho);x_1, \dots, x_{r-2},p) = N_r(\rho;x_1, \dots, x_{j-1}, e_{{}_X}, x_j, \dots, x_{r-2},p).
$
\end{obs}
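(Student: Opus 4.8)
Both facts recorded in the Observation are established along the lines of Proposition~\ref{prop:A_infty_action_maps}. \emph{Right actions.} By definition a right $A_\infty$-action $Q \racinfty X$ is a unital $A_\infty$-map $g : X \to {\rm End}(Q)^{\rm op}$, so the plan is to repeat the proof of Proposition~\ref{prop:A_infty_action_maps} with the single modification that the opposite product on ${\rm End}(Q)$ reverses the order in which the structure maps $M$ of $X$ enter the axioms; this amounts to interchanging the roles of the first and the last leaf of every tree, i.e.\ to using the shift operation $\forth{T}$ in place of the deshift $\back{T}$. Concretely one sets $N_1 = {\rm Id}_Q$ and, for $n \geqslant 2$, defines $N_n$ from $g$ by reading the $Q$-variable at the first leaf and the $X$-variables at the remaining leaves; the relation $\forth{S \circ^r_{|S|} T} = \forth{T} \circ^r_1 \forth{S}$ of \eqref{def:shift} is exactly what matches the reversal in ${\rm End}(Q)^{\rm op}$, and since a shift followed by a deshift is the identity the correspondence is invertible. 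Running the computation of the proof of Proposition~\ref{prop:A_infty_action_maps} with the shift in place of the deshift then yields the mirror image of conditions i)--ii): grafting $\tau$ onto the first leaf of $\rho$ nests $N_{|\tau|}(\tau;\cdots)$ into the $Q$-slot, while grafting onto the other leaves feeds $M_{|\tau|}(\tau;\cdots)$ into an $X$-argument.

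\emph{The degeneracy identity.} For this one only specializes condition i) of Proposition~\ref{prop:A_infty_action_maps} to $\tau = \delta_0 \in K_0$. Recalling $s_j(\rho) = \rho \circ_j \delta_0$, $|\delta_0| = 0$ and $M_0(\delta_0) = e_{{}_X}$, and writing $r = |\rho|$ so that $k = |\rho| + |\tau| - 1 = r - 1$, the left-hand side of i) becomes $N_{r-1}(s_j(\rho);x_{1,\dots,r-2},p)$ while the grafted subtree contributes $M_0(\delta_0) = e_{{}_X}$ in the $j$-th slot on the right, which is precisely the displayed identity $N_{r-1}(s_j(\rho);x_{1,\dots,r-2},p) = N_r(\rho;x_{1,\dots,j-1},e_{{}_X},x_{j,\dots,r-2},p)$, valid for $1 \leqslant j \leqslant r-1$. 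The same substitution in the mirrored conditions gives the corresponding degeneracy relation for right $A_\infty$-actions.

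\emph{Main obstacle.} Everything above is routine bookkeeping save for one point that genuinely requires care: one must verify that the shift $\forth{T}$ is well defined on metric trees, independently of the decomposition \eqref{eq:assoc}--\eqref{eq:comm}, and that after the passage to the opposite product the three properties in \eqref{def:shift} reproduce the right-handed associativity and action axioms with the correct leaf-to-argument dictionary. This is the mirror of the $\circ_1$ computation displayed in the proof of Proposition~\ref{prop:A_infty_action_maps}, and I would carry it out by the identical direct expansion, leaving the remaining symmetric details to the reader, as the text does elsewhere.
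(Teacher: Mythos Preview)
Your proposal is correct and aligns with the paper's treatment: the Observation is stated without proof, and your argument simply unpacks what the paper leaves implicit—namely that the right-action case is obtained by replacing the deshift $\back{T}$ by the shift $\forth{T}$ (exactly as the paper hints at the end of the proof of Proposition~\ref{prop:A_infty_action_maps}), and that the degeneracy identity is the specialization of condition~i) to $\tau=\delta_0$. The only remark is that your ``main obstacle'' is already dispatched in the paper: the well-definedness of $\forth{T}$ and $\back{T}$ independently of the decomposition~\eqref{eq:assoc}--\eqref{eq:comm} is asserted right after their definitions, so there is nothing new to verify here.
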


\subsection{Moore loops and Moore paths}\label{subsec:moore}

Let $\mathbb{R}^+$ be the space of non-negative real numbers. The space of Moore paths on a topological space $B$ is defined as follows:
\begin{equation*}
   \mpath (B) = \{ (\gamma,r) \in B^{\mathbb{R}^+} \!\!\!\times \mathbb{R}^+ : 
                    \gamma(t) = \gamma(r), \;\; \forall t \geqslant r \}
\end{equation*}
For any $(\gamma,r) \in \mpath (B)$, the number $r$ will be referred to as the \emph{length} of the Moore path $(\gamma,r)$. By abuse of notation we will often denote a Moore path simply by $\gamma$ and its length by $|\gamma|$, such abuse of notation must be understood as: $|(\gamma, r)| = r$.
The operation of cutting path is defined as follows. Given $0\leqslant a<b$ and an element $(\gamma,r)\in   \mpath (B)$ the path $(\gamma_{[a,b]},b-a)$ is defined by
\[\gamma_{[a,b]}(t)=\begin{cases} \gamma(t+a),& 0\leqslant t\leqslant b-a, \\ \gamma(b),& t\geqslant b-a. \end{cases}
\]

The space of based Moore paths on a based topological space $(B,\ast)$ is defined as follows:
\begin{equation*}
   \mpath^\ast (B) = \{ (\gamma,r) \in \mpath (B) : \gamma(0) = \ast \}.
\end{equation*}

The space of Moore loops on $(B,\ast)$ is defined as: 
\begin{equation*}
   \mloop(B) = \{ (\gamma,r) \in \mpath (B) :\gamma(0)=\ast \text{ and } \gamma(r) = \ast \}.
\end{equation*}
All spaces are topologized with the compactly generated topology.


There is a composition on $\mpath(B)$ given by $(\gamma_1, r_1)(\gamma_2,r_2) = (\gamma_1 \cdot \gamma_2, r_1 + r_2)$, for $(\gamma_1, r_1)$ and $(\gamma_2,r_2)$ in $\mpath(B)$. In fact, $\gamma_1(r_1)=\gamma_2(0)$, where $\gamma_1 \cdot \gamma_2$ is the juxtaposition defined by: 
\begin{equation*}
    (\gamma_1 \cdot \gamma_2)(t) = \left\{ \begin{array}{ll}
                                             \gamma_1(t),       & 0 \leqslant t \leqslant r_1, \\  
                                             \gamma_2(t - r_1), & r_1 \leqslant t. 
                                           \end{array} \right. 
\end{equation*}

Such operation defines the structure of a monoid action $\mloop (B) \acts \mpath^{\ast} (B)$. Given an inclusion $A \subseteq B$, the space of relative Moore loops is defined as:
\begin{equation*}
   \mloop(B,A) = \{ (\gamma,r) \in \mpath^\ast(B) : \gamma(r) \in A \}.
\end{equation*} 
Again there is a monoid action $\mloop(B) \acts \mloop(B,A)$ given by juxtaposition.


\subsection{Two-sided Bar Construction for $A_\infty$-actions}
Let us begin by recalling the two-sided bar construction. In our particular case, the definition will be given in terms of the Associahedra, hence there is no need to use the geometrical realization of simplicial objects. Given an $A_\infty$-space $X$ along with right and left $X$-spaces, the 2-sided bar construction amounts to considering planar metric trees whose first leaf is labeled by the elements of the right $X$-space, the last leaf is labeled by elements of the left $X$-space and the remaining leaves are labeled by the elements of the $A_\infty$-space $X$. 

The identifications are essentially those of Boardman-Vogt's $M$-construction whose elements are called {\it cherry trees}. Except that when the first (resp. last) leaf is involved, the identification must take into account the right (resp. left) $A_\infty$-actions on the corresponding spaces. 

\begin{defi}[$2$-sided bar construction]\label{def:2-sided_bar}
Let $(X,\{ M_k \}_{k \geqslant 0})$ be an $A_\infty$-space and let $P$ and $Q$ be left and right $X$-spaces respectively, with structure maps given by: $N_{k} : K_{k} \times X^{\times k-1} \times P \to P$ and $R_{k} : K_{k} \times Q \times X^{\times k-1} \to Q$, for $k \geqslant 1$. 
We define $B(Q,X,P)$ as the following quotient.
\begin{equation*}
     B(Q,X,P)  = \left. \left( \coprod_{n\geqslant 0} K_{n+2} \times Q \times X^{\times n} \times P \right) \right/ \equiv  
\end{equation*}
where $\equiv$ is the relation defined by:
\begin{equation*}
(\rho \circ_{i+1} \tau;x_0, x_1, \dots, x_{n+1}) \equiv (\rho;x_0, \dots,   
      x_{i-1}, E_{|\tau|}(\tau; x_{i, \dots, i+|\tau|-1}),x_{i+|\tau|}, \dots, x_{n+1})
\end{equation*}
where $x_0 \in Q$, $x_{n+1} \in P$ and $|\tau| \geqslant 1$ with $0 \leqslant i \leqslant |\rho| - 1$. In this definition, $E_k$ is just a symbol that must be replaced by $R_k, N_k$ or $M_k$ according to the position (left, right or middle) it occupies in the formula. 
\end{defi}

The topology of $B(Q,X,P)$ is the limit topology of the sequence $B_{n-1}(Q,X,P) \subseteq B_{n}(Q,X,P)$, where $B_n(Q,X,P)$ denotes the image of  
\begin{equation*}
K_n(Q,X,P) = \coprod_{k = 0}^n K_{k+2} \times Q \times X^{\times k} \times P,  
\end{equation*}
under the quotient map and each $B_n(Q,X,P)$ is given the quotient topology.

In terms of metric trees, the identifications in the above definition occur when some internal edge has length $1$. For instance, we have the following identifications:
\newcommand{\Aaa}{ *[o]+<4pt>{\tau} \ar@{-}[ul]     \ar@{-}[ur]                                                  }
\newcommand{\Abb}{ *[o]+<4pt>{\rho} \ar@{-}[u]_(.65){\hskip -2.7pt 1} \ar@{-}[ul] \ar@{-}[ur] \ar@{-}[urr] \ar@{-}[ull] \ar@{-}[d] }
\newcommand{\Aab}{ *[o]+<4pt>{\rho} \ar@{-}[u]      \ar@{-}[ul] \ar@{-}[ur] \ar@{-}[urr] \ar@{-}[ull] \ar@{-}[d] }
\newcommand{\Aba}{*[o]+<4pt>{\rho}  \ar@{-}[d] \ar@{-}[u] \ar@{-}[ur] \ar@{-}[ul]}
\[
\raisebox{30pt}{
\xymatrix@R=12pt@C=-2pt{  
        & x_{i+1} &     & x_{j-1}&         \\
    x_0 & x_i     &\Aaa & x_j    & x_{n+1} \\
        &         &\Abb &        &         \\
        &         &     &        &         \\ } \quad \raisebox{-30pt}{=} \hskip -10pt
\raisebox{-10pt}{
\xymatrix@R=12pt@C=-2pt{
    x_0 & x_{i}   &   M  & x_{j}  & x_{n+1}  \\
        &         & \Aab &        &          \\
        &         &      &        &          \\ }}
\raisebox{-30pt}{and} 
\xymatrix@R=12pt@C=-2pt{  
    x_0 &   x_1           & x_{j-1} &         \\
        &\Aaa \ar@{-}[u]  & x_j     & x_{n+1} \\
        &                 & \Aba^1  &         \\
        &                 &         &         \\ } \quad \raisebox{-30pt}{=} \hskip -10pt
\raisebox{-10pt}{
\xymatrix@R=12pt@C=-2pt{
     R & x_j  & x_{n+1}  \\
       &\Aba  &          \\
       &      &          \\ }}
}
\]
where $M = M_{j-i-1}(\tau; x_{i+1,\dots,j-1})$ and $R = R_{j}(\tau;x_0, x_{1,\dots,j-1})$. The above trees are simplified since only part of their leaves are shown. 

Given an $A_\infty$-space $X$, both $X$ and the one point space $\ast$ are $X$-spaces with the canonical and trivial actions respectively.

\begin{defi}
Let $X$ be an $A_\infty$-space with a left $A_\infty$-action $X \acinfty P$.
\begin{enumerate}[i)]
\item The classifying space of $X$ is defined as $B(*,X,*)$ and denoted by $BX$. 
\item The homotopy orbit space of the $A_{\infty}$-action $X \acinfty P$ is defined as $B(\ast,X,P)$ and will be denoted by $P \hquotient X$. 
\end{enumerate}
\end{defi}

\begin{nota}
For each element $(T;q,x_{[n]},p) \in K_{n+2} \times Q \times X^n \times P$, its class in $B(Q,X,P)$ will be denoted by  
$[T;q,x_{[n]},p]$.
\end{nota}

There is an important map $f:X \to \mloop (BX)$ that can be defined as follows. For each $x \in X$, the loop $f(x) \in \mloop (BX)$ of length $2$ is defined by:
\begin{equation*}
  f(x)(t) = \left\{ \begin{array}{ll}
                   {[} \delta_2 \circ_2^{1-t} \delta_2 ; \ast , x, \ast {]},       & 0 \leqslant t \leqslant  1,  \\ 
                   {[} \delta_2 \circ_1^{t-1} \delta_2 ; \ast , x, \ast {]},   & 1 \leqslant t \leqslant  2.
                 \end{array}\right.
\end{equation*}
 
The map $f$ will be called the \emph{usual map}. The following picture illustrates the map $f$. The idea is that $x$ is sitting over a leaf which is sliding along the edges of the tree. It defines a closed loop because the action on $\{ \ast \}$ is trivial. 
\renewcommand{\Aaa}{*[o]-<5pt>{\bullet}  \ar@{-}[ul] \ar@{-}[ur] }
\renewcommand{\Abb}{*[o]-<5pt>{\bullet}  \ar@{-}[d]  \ar@{-}[ul] \ar@{-}[ur] }
\renewcommand{\Aab}{*[o]-<5pt>{\bullet}  \ar@{-}[d]  \ar@{-}[ur] \ar@{-}[ul] }
\begin{equation} \label{eq:sliding_vertices_simple}
  \raisebox{25pt}{
  \xymatrix@R=6pt@C=2pt{
        &     x        &      & \ast &    \\
  \ast  &              & \Aaa &      &    \\
        & \Abb_{1 - t} &      &      &    \\
        &              &      &      &    \\ }}
  \mbox{for} \quad 0 \leqslant t \leqslant 1 \quad \mbox{ and }
  \raisebox{25pt}{
  \xymatrix@R=6pt@C=2pt{
 \ast &       &        x     &       &      \\
      & \Aaa  &              & \ast  &      \\
      &       & \Aab^{t - 1} &       &      \\
      &       &              &       &      \\ }}
  \quad \mbox{for} \quad 1 \leqslant t \leqslant 2,
\end{equation}

The fact that $P$ is a deformation retract of $B(X,X,P)$ and that the usual map $X \to \mloop (BX)$ is an $A_\infty$-homotopy equivalence are two fundamental facts about the two-sided bar construction. It follows that $B(X,X,P)$ provides a model for $P$ which is appropriate for the study of the $A_\infty$-equivariance of $A_\infty$-actions. Given their importance for our purposes, we will show that those facts are valid for the version of 2-sided bar construction used in the present paper. The idea behind the proofs is the process of sliding edges of trees associated with the homotopy extension property. 

For the next theorems  we need to introduce further notation: an edge-labeled tree $T$  is called comb-reducible if it decomposes as $T=L\circ_1^u R$ for some trees $L,R$ and some $u\in[0,1]$, and comb-irreducible otherwise. Thus any edge-labeled tree $T$ decomposes uniquely as follows:
\begin{equation}\label{T:decomp}
T=T_{k+1}\circ_1^{u_k} T_k\circ_1^{u_{k-1}}\ldots\circ_1^{u_1} T_1
\end{equation}
where the trees $T_i$ are comb-irreducibles. We will call such a decomposition the comb decomposition of a tree.

In what follows, we will define certain Moore paths from a given tree $T$. Since each Moore path must have a length, we first define: 
$l(T)=2(1 + \sum_{r=1}^k u_r)$. Note that: 
\begin{equation*}
l(T\circ_1^r S)=l(T)+l(S)+2r-2.
\end{equation*}
For any tree $T$, the number $l(T)$ will be called the {\it length} of $T$. 
The length is well defined on the quotient space $K_n$. Note that one can extend the length on $K_1$ by $l(\delta_1)=0$.

\begin{thm}\label{thm:usual_map}
    The usual map $X \to \mloop (BX)$ is an $A_\infty$-map.
\end{thm}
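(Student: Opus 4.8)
The plan is to exhibit an explicit $A_\infty$-structure on the usual map $f : X \to \mloop(BX)$, i.e. to construct maps $f_i : K_{i+1} \times X^i \to \mloop(BX)$ satisfying \eqref{eq:amap1} and \eqref{eq:amap2}, with $f_1(\delta_2;x) = f(x)$, and then verify that the identities hold. The guiding geometric picture is the one in \eqref{eq:sliding_vertices_simple}: an element $x$ (or more generally a configuration $(x_1,\dots,x_i)$ organized by a tree $\rho \in K_{i+1}$) sits over a leaf, and the parameter $t$ slides that leaf-and-its-label around the ``outside'' of the metric tree inside $BX = B(*,X,*)$, starting and ending at the trivial point since the action on $\{*\}$ is trivial. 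Concretely, for $\rho \in K_{i+1}$ I would set the length of the loop $f_i(\rho;x_{1\dots i})$ to be $l(\forth{\rho}) = l(\back{\rho})$ and define, for a suitable reparametrization of $[0, l(\rho)]$,
\begin{equation*}
 f_i(\rho; x_{1\dots i})(t) = [\, \forth{\rho} \circ^{s(t)}_{1} \rho_t \,;\, *, x_{1\dots i}, * \,]
\end{equation*}
for $t$ in the first half, where the last leaf is slid up using the shift $\forth{\rho}$, and symmetrically using the deshift $\back{\rho}$ for $t$ in the second half. The shift and deshift operations of \eqref{def:shift}--\eqref{def:deshift} are exactly the tools that let the label travel from the last leaf ``over the top'' and back down to the first leaf; the comb decomposition \eqref{T:decomp} and the length formula $l(T\circ_1^r S) = l(T)+l(S)+2r-2$ are what make the Moore-path length bookkeeping consistent.

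The main steps, in order, would be: (1) make the definition of $f_i(\rho;x_{1\dots i})$ precise as a Moore loop of length $l(\rho)$, checking it is well-defined on the quotient $K_{i+1}$ — here one uses that collapsing a length-$0$ internal edge changes neither $l$ nor the class in $BX$, and that the degeneracy/unit relation from the $\obs$ after Proposition~\ref{prop:A_infty_action_maps} is respected so that $f$ is unital with $f_0(\delta_1) = e_{\mloop(BX)}$, the constant loop; (2) verify \eqref{eq:amap1}: when $\rho = \rho' \circ_{j+1} \tau$ with $1 \le j < |\rho'|$, the internal edge joining $\tau$ to $\rho'$ does not lie on the path from the last leaf to the root, so under the $\equiv$-relation of Definition~\ref{def:2-sided_bar} the subtree $\tau$ with its labels $x_{j\dots j+|\tau|-1}$ collapses to the single label $M_{|\tau|}(\tau; x_{j\dots j+|\tau|-1})$ sitting over one leaf, which is precisely the right-hand side of \eqref{eq:amap1}; (3) verify \eqref{eq:amap2}: when $\rho = \rho' \circ_1 \tau$, grafting on the \emph{first} leaf means the sliding path naturally breaks into two consecutive stages — slide over the $\tau$-part, then over the $\rho'$-part — which, after matching the Moore-path parametrizations via the length additivity formula, is exactly the juxtaposition $f_{|\tau|-1}(\tau;\cdots)\cdot f_{|\rho'|-1}(\rho';\cdots)$ in $\mloop(BX)$, i.e. the product in the monoid $\mathrm{End}$-side. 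Note $\mloop(BX)$ is a genuine (Moore) topological monoid, so the target being a strict monoid as required in the definition of $A_\infty$-map is automatic.

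The step I expect to be the main obstacle is (3), the compatibility with $\circ_1$ and the resulting juxtaposition identity \eqref{eq:amap2}. The subtlety is that the decomposition $\rho = \rho' \circ_1 \tau$ introduces an internal edge of some length $r$, and the sliding loop for $\rho$ must be matched — on the nose, not just up to homotopy — with the \emph{concatenation} of the loop for $\tau$ and the loop for $\rho'$; this forces a careful choice of the reparametrization $s(t)$ of $[0, l(\rho)]$ so that the length formula $l(\rho) = l(\rho') + l(\tau) + 2r - 2$ lines up the two sub-intervals correctly, including the behavior at the grafting parameter $r \in \{0,1\}$ and at the junction point $t$ where the label crosses from the $\tau$-region to the $\rho'$-region (this is where the third clause of \eqref{def:shift}, $\forth{S\circ^r_{|S|}T} = \forth{T}\circ^r_1\forth{S}$, does the essential work). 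Once the parametrization is pinned down, identities \eqref{eq:amap1} and the unitality relation are comparatively mechanical consequences of the defining relation $\equiv$ of the bar construction together with the shift/deshift identities. I would also remark that, as in \cite{jds:lnm}, one may alternatively package the whole construction via the $W$-construction $W(\mathsf{Ass}_{b\to w})$ and the multiplihedra, but the explicit sliding description above is more transparent and is the one already foreshadowed by \eqref{eq:sliding_vertices_simple}.
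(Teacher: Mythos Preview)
Your strategy is the same as the paper's: build $f_n$ from the ``sliding'' picture already visible in \eqref{eq:sliding_vertices_simple}, then verify \eqref{eq:amap1} from the $\equiv$-relation in $B(*,X,*)$ and \eqref{eq:amap2} from the way the sliding path factors when the tree decomposes as $T_1\circ_1 T_2$. You also correctly isolate the $\circ_1$ case as the one requiring care.

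Two points where the paper's execution differs from yours and where your write-up needs repair. First, the paper does not work directly with shift/deshift to define the loop; instead it abstracts everything into an auxiliary family $\sigma_n:K_n\times\mathbb R^+\to K_{n+1}$ (Lemma~\ref{lemma:family_of_maps}) characterized by three properties \eqref{alig:sigma}, and then simply sets $f_n(T;x_{[n]})(t)=[\sigma(T)(t);*,x_{[n]},*]$. The deshift $\back T$ appears only at the endpoint condition $\sigma(T)(0)=\delta_2\circ_2\back T$ and inside the $\circ_1$-relation \eqref{alig:circ_1}; the recursion that builds $\sigma$ is on the decomposition $T=T_1\circ_i^r T_2$, not on the shift. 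This separation makes all three verifications mechanical. Second, your displayed formula $[\forth{\rho}\circ_1^{s(t)}\rho_t;\,*,x_{1\dots i},*]$ is not well-typed as written: with $\rho\in K_{i+1}$ you need a tree in $K_{i+2}$, but $\forth{\rho}\in K_{i+1}$, the symbol $\rho_t$ is undefined, and the proposed Moore length $l(\forth{\rho})$ is in general different from $l(\rho)$ (e.g.\ $l(\delta_2\circ_1^r\delta_2)=2+2r$ while $l(\forth{\delta_2\circ_1^r\delta_2})=l(\delta_2\circ_2^r\delta_2)=2$). The paper takes the length to be $l(T)$ itself, and the recursive definition of $\sigma$ in Lemma~\ref{lemma:family_of_maps} is what makes the additivity $l(T\circ_1^r S)=l(T)+l(S)+2r-2$ line up with Moore-path juxtaposition on the nose. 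I would recast your construction in those terms.
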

\begin{proof}
We need to exhibit maps 
   $f_n : K_{n+1} \times X^n \to \mloop (BX)$ for $n \geqslant 1$,
such that relations (\ref{eq:amap1}) and (\ref{eq:amap2}) are verified and $f_1$ coincides with the usual map $X \to \mloop (BX).$ 
To that end, we use a family of maps $\sigma_{n+1} : K_{n+1} \times \mathbb R^+ \to K_{n+2}$, for $n \geqslant 0$. The existence of a similar family of maps was originally proven by the third author in \cite{Stasheff63} (see Proposition 25). See Lemma \ref{lemma:family_of_maps} for a proof of their existence.

For simplicity, we omit the subscript and for any $T \in K_{n+1}$, $\sigma(T)$ will be a Moore path $\sigma(T) : \mathbb R^+ \to K_{n+2}$  of length $l(T)$, defined above. The properties of $\sigma$ that are relevant for the $A_\infty$-structure are the following:

\begin{subequations} \label{alig:sigma}
  \begin{gather}
      \sigma(T)(0) = \delta_2 \circ_2 \back T \ \mbox{ and } \ \sigma(T)(l(T)) = \delta_2 \circ_1 T;   \label{alig:sigma_0}    \\ 
    \sigma(T_1 \circ_i T_2) = \sigma(T_1) \circ_i T_2,  
     \quad \mbox{ for } \quad 1 < i \leqslant |T_1|;  \label{alig:circ_i} \\ 
    \sigma(T_1 \circ_1 T_2) = (\sigma(T_2) \circ_{|T_2| + 1} \back{T_1} ) \cdot 
                  (\sigma(T_1) \circ_1 T_2),   \label{alig:circ_1}
      \end{gather}
\end{subequations}

where $\back T$ is the deshift (\ref{def:deshift}) and the trees $T_1$ and $T_2$ are such that $|T_1|,|T_2| \geqslant 2$.  In addition we ask that $\sigma(\delta_2)$ coincides with the sliding of edges 
described in (\ref{eq:sliding_vertices_simple}).

Finally, the maps $f_n : K_{n+1} \times X^n \to \mloop (BX)$ are defined by:
\begin{equation*}
    f_n(T;x_{[n]})(t) = [\sigma(T)(t);\ast,x_{[n]},\ast].
\end{equation*}
The fact that such a family verifies the required conditions follows from the properties of $\sigma$ and the definition of the $2$-sided bar construction. 

In fact, the condition on $\sigma(\delta_2)$ implies that $f_1 : X \to \mloop (BX)$ is the usual map. From (\ref{alig:sigma_0}) we have that $f_n(T;x_{[n]})$ is indeed a 
loop because of the trivial action of $X$ on $\{ \ast \}$. 
On the other hand, equation (\ref{alig:circ_i}) along with the identifications in $B(\ast,X,\ast)$ implies that the family of maps 
$\{ f_n \}$ satisfies the condition (\ref{eq:amap1}) in the definition of $A_\infty$-maps.

To finish this proof, we just need to check equation (\ref{eq:amap2}) in the definition of $A_\infty$-maps. We observe that equation (\ref{alig:circ_1}) implies that: \[ f_n(T_1 \circ_1 T_2;x_{[n]}) = f_i(T_2;x_{[i]}) \cdot f_j(T_1;x_{[j]}),\] where $i+j=n$ and the loop $f_i(T_2;x_{[i]})$ given by the movement of the tree $T_2$ is followed by the loop $f_j(T_1;x_{[j]})$ given by the movement of $T_1$, as schematically illustrated (i.e. not showing the true number of leaves) in the following picture:
\renewcommand{\Aaa}{*[o]+<5pt>{{}_{\back{T_2}}}  \ar@{-}[ul] \ar@{-}[ur] }
\newcommand{\Baa}{*[o]+<5pt>{{}_{T_2}}  \ar@{-}[ul] \ar@{-}[ur] }
\renewcommand{\Abb}{*[o]-<5pt>{\bullet}                          }
\renewcommand{\Aab}{*[o]-<5pt>{\bullet}  \ar@{-}[d]              }
\renewcommand{\Aba}{*[o]+<5pt>{{}_{\back{T_1}}}  \ar@{-}[ur] \ar@{-}[ul] }
\newcommand{\Bba}{*[o]+<5pt>{{}_{T_1}}  \ar@{-}[ur] \ar@{-}[ul] }
\begin{equation*} 
  \Big(\raisebox{35pt}{
  \xymatrix@R=6pt@C=4pt{
    &                                      &        &      &    \\
    &                                      &        & \Aba &    \\
    &                                      & \Aaa_1 &      &    \\
    & \Aab \ar@{-}[ul] \ar@{-}[ur]_{1 - t} &        &      &    \\
    &                                      &        &      &    \\ }}\Big)\cdot
  \Big(\raisebox{25pt}{
  \xymatrix@R=6pt@C=4pt{
    &      &                                        &       &      \\
    & \Baa &                                        & \Aba  &      \\
    &      & \Aab \ar@{-}[ul]^t \ar@{-}[ur]_1       &       &      \\
    &      &                                        &       &      \\ }}\Big)\cdot
  \Big(\raisebox{25pt}{
  \xymatrix@R=6pt@C=4pt{
    &       &                                         &       &    \\
    & \Baa  &                                         & \Aba  &    \\
    &       & \Aab \ar@{-}[ul]^{1} \ar@{-}[ur]_{1-t}  &       &    \\
    &       &                                         &       &    \\ }}\Big)\cdot
  \Big(\raisebox{35pt}{
  \xymatrix@R=6pt@C=4pt{
    &      &          &                                 &   \\  
    & \Baa &          &                                 &   \\
    &      & \Bba^{1} &                                 &   \\
    &      &          & \Aab \ar@{-}[ul]^t \ar@{-}[ur]  &   \\
    &      &          &                                 &   \\ }}\Big). 
\end{equation*}
where $t \in [0,1]$ for each tree, 
and $\cdot$ denotes the juxtaposition of paths. 
\end{proof}

\begin{lem}\label{lemma:family_of_maps}
There exists a family of maps $\sigma_n : K_n \times \mathbb R^+ \to K_{n+1}$ for $n \geqslant 2$ satisfying the 
properties (\ref{alig:sigma}) presented above.
\end{lem}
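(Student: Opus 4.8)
The plan is to construct the family $\{\sigma_n\}_{n\geqslant 2}$ by induction on $n$, taking the three clauses of (\ref{alig:sigma}) as the \emph{definition} of $\sigma_n$ on the boundary of the associahedron $K_n$ and then extending over the interior by a contractibility argument. The base case $n=2$ is immediate: $K_2$ is a point, so $\sigma_2$ is just the single Moore path $\sigma(\delta_2) : \mathbb R^+\to K_3$ of length $l(\delta_2)=2$ furnished by the sliding of vertices in (\ref{eq:sliding_vertices_simple}); since $\back{\delta_2}=\delta_2$, this path does run from $\delta_2\circ_2\back{\delta_2}$ to $\delta_2\circ_1\delta_2$, so (\ref{alig:sigma_0}) holds and the remaining clauses of (\ref{alig:sigma}) are vacuous when $n=2$.

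For the inductive step, suppose $n\geqslant 3$ and that $\sigma_a$ has been built for all $a<n$. The facets of $K_n$ are precisely the images of the grafting maps $\circ_i^1 : K_a\times K_b\to K_n$ with $a+b=n+1$ and $a,b\geqslant 2$, so $\partial K_n=\bigcup_{i,a,b}\mathrm{Im}(\circ_i^1)$. On the facet $\mathrm{Im}(\circ_i^1)$ I would \emph{declare} $\sigma_n$ to be the right-hand side of (\ref{alig:circ_i}) when $i>1$ and the right-hand side of (\ref{alig:circ_1}) when $i=1$; the Moore length of the resulting path is then $l(T_1)$ in the first case and $l(T_1)+l(T_2)$ in the second, which in both cases equals the value of the continuous length function $l$ on that facet, by the identity $l(T\circ_1^r S)=l(T)+l(S)+2r-2$. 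One then has to check three things: first, that these prescriptions agree on the overlaps of facets, i.e.\ on trees carrying two internal edges of length $1$ — which reduces to the operad identities (\ref{eq:assoc})--(\ref{eq:comm}) together with the fact, noted after (\ref{def:deshift}), that $\back{(-)}$ is independent of the chosen decomposition of a tree; second, that they are compatible with the identification $\sim$ along the loci where a facet meets an internal edge of length $0$, which follows from the inductively known compatibility of $\sigma_a,\sigma_b$ with $\sim$ and with the degeneracies; and third, that the endpoints of $\sigma_n(T)$, for $T\in\partial K_n$, come out to be $\delta_2\circ_2\back T$ and $\delta_2\circ_1 T$, which follows from (\ref{alig:sigma_0}) for $\sigma_a,\sigma_b$ and the identities (\ref{def:deshift}), (\ref{eq:assoc}), (\ref{eq:comm}).

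Next I would extend $\sigma_n$, at this point a continuous map on $\partial K_n\times\mathbb R^+$, over all of $K_n\times\mathbb R^+$, so that $\sigma_n(T)$ is a Moore path in $K_{n+1}$ of length $l(T)$ from $\delta_2\circ_2\back T$ to $\delta_2\circ_1 T$. Since $K_n$ is a disk, this amounts to extending a partial section, given over $\partial K_n$, of the family over $K_n$ whose fibre at $T$ is the space of Moore paths in $K_{n+1}$ of length $l(T)$ with the two prescribed endpoints; that fibre is homotopy equivalent to $\Omega K_{n+1}$, hence contractible since $K_{n+1}$ is contractible, so the extension exists and can be chosen continuously in $T$ (for instance by coning radially off an interior point, using that $l$ and the two endpoint maps extend continuously to the interior of $K_n$). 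Because the whole construction is carried out on the quotient $K_n$, this already yields a map $\sigma_n : K_n\times\mathbb R^+\to K_{n+1}$; near the corolla one may take the extension to be the obvious analogue of (\ref{eq:sliding_vertices_simple}) — shrink the new internal edge on one side of the vertex to length $0$ and regrow it on the other — which keeps the construction faithful to the picture of a leaf sliding across a vertex.

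I expect the main obstacle to be the consistency check in the inductive step, together with the attendant length bookkeeping: one must make sure that the values forced on the various facets by (\ref{alig:circ_i}) and (\ref{alig:circ_1}) really do agree on every overlap and on the length-$0$ loci, and that the Moore length prescribed by $l$ varies over $K_n$ compatibly with the formula above and with the juxtapositions occurring in (\ref{alig:circ_1}); the interior extension itself is soft. This bookkeeping is, in an equivalent form, precisely what is carried out in Proposition~25 of \cite{Stasheff63}, to which the construction above can be reduced.
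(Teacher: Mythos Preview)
Your approach is correct but takes a genuinely different route from the paper's. You prescribe $\sigma_n$ on $\partial K_n$ via (\ref{alig:circ_i}) and (\ref{alig:circ_1}) and then extend over the interior by the contractibility of $K_{n+1}$; this is the obstruction-theoretic argument in the spirit of Proposition~25 of \cite{Stasheff63}, which the paper itself cites. The paper instead gives an \emph{explicit} recursive formula valid for every edge-labeled tree, not just those on the boundary: for $T=T_1\circ_i^r T_2$ with arbitrary $r\in[0,1]$ it sets $\sigma(T)=\sigma(T_1)\circ_i^r T_2$ when $i>1$, and when $i=1$ it defines $\sigma(T)$ as the juxtaposition of the cut paths $\sigma(T_2)_{[0,\,l(T_2)+r-1]}\circ_{|T_2|+1}^r\back{T_1}$ and $\sigma(T_1)_{[1-r,\,l(T_1)]}\circ_1^r T_2$, anchored by the boundary prescription $\sigma(T)(t)=\delta_2\circ_2^{1-t}\back T$ for $0\leqslant t\leqslant 1$ and $\sigma(T)(t)=\delta_2\circ_1^{t-l(T)+1}T$ for $l(T)-1\leqslant t\leqslant l(T)$. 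No contractibility is invoked; one checks directly that the formula is independent of the decomposition (\ref{eq:assoc})--(\ref{eq:comm}), continuous in $T$, and descends to $K_n$. Your soft argument is shorter and more conceptual, and the consistency check you flag as the main obstacle is exactly what must be verified; the paper's explicit construction buys a concrete $\sigma$ whose ``sliding vertex'' description is visible throughout and is reused verbatim in building the family $\lambda$ in Proposition~\ref{prop:equivariance}.
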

\begin{proof}
  We use a recursive argument involving edge-labeled trees. 
  

For any tree $T \in \cal T_n$ the path $ \sigma(T) : \mathbb R^+ \to K_{n+1}$ has the following property:
\begin{equation}\label{eq:sigma_corolla}
   \sigma(T)(t) = \left\{ 
                             \begin{array}{ll}
                               \delta_2 \circ_2^{1-t} \back T, & 0 \leqslant t \leqslant 1, \\
                               \delta_2 \circ_1^{t-l(T)+1} T, & l(T)-1 \leqslant t \leqslant l(T),
                             \end{array}  
                        \right.
\end{equation}
This property defines the path $\sigma(\delta_n)$ for any corolla for $l(\delta_n)=2$.

Let us assume that $\sigma$ is defined for edge-labeled trees with $k<n$ leaves and satisfies (\ref{eq:sigma_corolla}). Now consider a 
tree $T$ such that $T = T_1 \circ_i^r  T_2$

When $1<i $, define: $\sigma(T) = \sigma(T_1) \circ_{i}^r T_2$. 

When $i = 1$, define $\sigma(T)$ as the following juxtaposition of paths
\[ \
(\sigma(T_2)_{[0,l(T_2)+r-1]}\circ_{|T_2|+1}^r \back T_1)\cdot 
(\sigma(T_1)_{[1-r,l(T_1)]}\circ_1^r T_2).
\]
Note that this definition is consistent thanks to (\ref{eq:sigma_corolla}). It is not hard to check that these definitions are independent of the decomposition of
a tree as in (\ref{eq:assoc}) and (\ref{eq:comm}), that $\sigma$ satisfies  (\ref{eq:sigma_corolla}), that $\sigma$ is continuous with respect to $T$ and that it passes to the quotient space $K_n$.
Moreover, properties (\ref{alig:sigma}) follow directly from the construction of $\sigma$. 
\end{proof}

The intuitive description of the paths $\sigma(T)$ is in terms  of {\it sliding vertices along edges in a coherent way}, as illustrated in equation 
(\ref{eq:sliding_vertices}).

\begin{thm}\label{thm:def_retract}
For any $A_\infty$-action $X \acinfty P$, the space $P$ is a deformation retract of $B(X,X,P)$.
\end{thm}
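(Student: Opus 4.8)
The plan is to exhibit an explicit deformation retraction of $B(X,X,P)$ onto $P$, where $P$ sits inside $B(X,X,P)$ as the set of classes $[\delta_1 \circ_2 \delta_1 ; e_X, p]$ coming from the corolla $\delta_2$ with both non-last leaves labeled by the unit. Actually, the more natural inclusion is $P \hookrightarrow B(X,X,P)$, $p \mapsto [\delta_2; e_X, p]$, using the strict unit of $X$; this is well defined and continuous. The retraction $r : B(X,X,P) \to P$ sends a class $[T; x_0, x_{[n]}, p]$ to the element of $P$ obtained by applying the action and multiplication maps of the pair $(X, P)$ to collapse the entire tree — concretely, $r[T; x_0, x_{[n]}, p] = N_{n+2}(\overline{T}; x_0, x_{[n]}, p)$ where one reads off a single iterated action from the comb structure of $T$ (or, since all internal edges can be normalized to length $1$ modulo the relation $\equiv$, one simply evaluates the underlying "parenthesized action" on $(x_0, \dots, x_n, p)$ and multiplies $x_0$ into the $X$-part of the left action, using that $\ast$-coordinate). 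The key point is that $r$ is well defined on the quotient precisely because the relation $\equiv$ in Definition \ref{def:2-sided_bar} is built from exactly the $A_\infty$-action and $A_\infty$-multiplication identities, and $r \circ \iota = \mathrm{id}_P$ by unitality.

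**Next** I would construct the homotopy $H : B(X,X,P) \times [0,1] \to B(X,X,P)$ from $\iota \circ r$ to the identity by the sliding-edges technique the authors have already been using for $\sigma$ and for the usual map. The idea: given a class $[T; x_0, x_{[n]}, p]$, slide a vertex down along the spine toward the root so that at time $t$ one has grafted a "fresh" $\delta_2$-vertex at the root (carrying the coordinate $x_0 \in X$ on its new leaf) connected by an edge of length $t$, while the original tree $T$ has been pushed up. At $t = 0$ this new edge has length $0$ and collapses, recovering $[T; x_0, x_{[n]}, p]$; at $t = 1$ the edge has length $1$ and the relation $\equiv$ forces the entire subtree above to be evaluated by $N$, collapsing everything into $P$ and landing on $\iota(r[T;\dots])$. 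This is literally a degenerate case of the maps $\sigma_n$ from Lemma \ref{lemma:family_of_maps} applied with all $X$-labels retained, and continuity plus the filtration $B_n \subseteq B_{n+1}$ guarantees $H$ is continuous (one checks compatibility with the relation $\equiv$ edge by edge, exactly as in the proof of Theorem \ref{thm:usual_map}).

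**The main obstacle** I expect is the well-definedness of both $r$ and the homotopy $H$ on the quotient: one must verify that the formula for $r$ is genuinely independent of the comb decomposition \eqref{T:decomp} and of the choices in \eqref{eq:assoc}–\eqref{eq:comm}, and that $H$ respects the identification $\equiv$ for every internal edge of $T$ reaching length $1$, not just the one being slid. This requires an induction on $n$ (the number of $X$-labels) using the filtration, combined with the associativity/equivariance relations (i) and (ii) of Proposition \ref{prop:A_infty_action_maps} and the bar-construction relation — essentially a bookkeeping argument of the same flavor as, but somewhat more involved than, the checks in the proof of the usual-map theorem. A secondary point is verifying $H_0 = \mathrm{id}$, $H_1 = \iota \circ r$, and that $H_t \circ \iota = \iota$ for all $t$ (so that it is a \emph{strong} deformation retraction onto $P$); the last of these follows because on the image of $\iota$ the sliding introduces only edges of length $0$ or collapses units, leaving the class unchanged. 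I would package the sliding homotopy abstractly by reusing the $\sigma_n$ maps rather than re-deriving everything, which keeps the argument short and parallel to the preceding proof.
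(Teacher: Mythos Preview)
Your overall strategy matches the paper's: embed $P$ via $p\mapsto[\delta_2;e_X,p]$, retract via $[T;x_0,x_{[n]},p]\mapsto N_{n+2}(T;x_0,x_{[n]},p)$, and connect the identity to $\iota\circ r$ by a sliding homotopy. However, there is a real gap.

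First, your description of $H$ is not quite right as stated. The naive formula ``graft a fresh $\delta_2$ at the root with edge of length $t$'' does not recover $[T;x_0,x_{[n]},p]$ at $t=0$: collapsing a length-$0$ edge produces a tree with $n+3$ leaves, and the relation $\equiv$ only fires at edge length $1$, so the extra leaf does not disappear. The paper instead builds a path $\gamma_T:[0,1]\to K_{n+3}$ from $T\circ_1\delta_2$ to $\delta_2\circ_2 T$ by sliding a $\delta_2$ down the leftmost branch, so that at $t=0$ the relation $(T\circ_1\delta_2;e,x_0,\ldots)\equiv(T;M_2(\delta_2;e,x_0),\ldots)=(T;x_0,\ldots)$ gives the identity.

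Second, and more seriously, even this corrected sliding homotopy $\Gamma$ does \emph{not} descend to $B(X,X,P)$, and this is not a bookkeeping matter. The obstruction is the $i=0$ case of the relation in Definition~\ref{def:2-sided_bar}, namely $(\rho\circ_1\tau;\ldots)\equiv(\rho;M_{|\tau|}(\tau;\ldots),\ldots)$. The sliding path for $\rho\circ_1\tau$ must first traverse the leftmost branch of $\tau$ before it reaches $\rho$, whereas the sliding path for $\rho$ alone has no such segment; the two differ by a genuine reparametrization and are only homotopic rel endpoints, not equal. (One has $\gamma_{\rho\circ_{i+1}\tau}=\gamma_\rho\circ_{i+2}\tau$ strictly only for $i\geqslant 1$.) Your comparison with Theorem~\ref{thm:usual_map} is misleading: there the $\circ_1$ discrepancy is absorbed by juxtaposition in the target monoid $\mloop(BX)$, but here the target is $B(X,X,P)$ itself and no such mechanism is available. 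The paper resolves this by an inductive application of the homotopy extension property, using that $\partial K_{n+2}\hookrightarrow K_{n+2}$ is a cofibration, to deform $\Gamma$ into a $\tilde\Gamma$ that genuinely respects $\equiv$. That HEP step is the substantive missing ingredient in your outline.
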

\begin{proof}

The embedding $P \into B(X,X,P)$ is defined by $p \hookrightarrow [\delta_2; e, p]$. 
Now consider the map $B(X,X,P) \to P$ given by $[T;x,x_{[n]},p] \mapsto N_{n+2}(T;x,x_{[n]},p)$, where the $\{N_{i}\}_{i\geqslant 1}'s$ are the structure maps of the $A_\infty$-action $X \acinfty P$. This map is well defined because the equivalence relations in the definition of $B(X,X,P)$ involve the $A_\infty$-structure maps. 
The composition $P \to B(X,X,P) \to P$ is the identity on $P$.

Let us show that $B(X,X,P) \to P \to B(X,X,P)$ is homotopic to the identity. This last map, can be explicitly described as follows: 
\begin{equation*}
  [T;x,x_{[n]},p] \mapsto [\delta_2 \circ_2 T;e,x,x_{[n]},p]. 
\end{equation*}  

The proof is divided in several steps. 
We first build
for any tree $T \in K_n$, a path $\gamma_T : [0,1] \to K_{n+1}$ such that $\gamma_T(0) = T \circ_1 \delta_2$ and $\gamma_T(1) = \delta_2 \circ_2 T$ yielding to a homotopy $\Gamma: K(X,X,P)\times I\rightarrow B(X,X,P)$. Unfortunately this homotopy does not pass to the quotient space $B(X,X,P)$. However,
given two elements $x,y\in K(X,X,P)$ such that $x\equiv y$ we will prove in a second step that $\Gamma(x)$ is homotopic to $\Gamma(y)$. In the last step, we prove, using the homotopy extension property, that we can replace $\Gamma$ by  a $\tilde\Gamma$, homotopic to $\Gamma$ and satisfying  $\tilde\Gamma(x)=\tilde\Gamma(y)$ whenever $x\equiv y$.

\medskip

\noindent {\sl Notation.} Given an edge-labeled tree $T\in \cal T_n$ and its comb decomposition (\ref{T:decomp})
\[
T=T_{r+1}\circ_1^{u_{r}}\ T_r\circ_1^{u_{r-1}}\ldots\circ_1^{u_1} T_1,
\]
we define:

\begin{itemize}
\item $T=R_i\circ_1^{u_i} L_i$, where $R_i=T_{r+1}\circ_1^{u_{r}}\ T_r\circ_1^{u_{r-1}}\ldots\circ_1^{u_{i+1}} T_{i+1}$
and  \\
$L_i=T_{i}\circ_1^{u_{i-1}}\ T_r\circ_1^{u_r}\ldots\circ_1^{u_1} T_1$ for any $1\leqslant i\leqslant r$.
\item $l(T)=2(1+\sum_{k=1}^r u_k)$ 
\item $l_i(T)=1+2\sum_{k=1}^{i-1}u_k+u_i$ for $1\leqslant i\leqslant r$.
\end{itemize}
When there is no confusion, we will write $l,l_i$ instead of $l(T), l_i(T)$.

\medskip

\noindent {\sl First step: the construction of $\Gamma$.} 

The path $\gamma_T:[0,1]\rightarrow K_{n+1}$  is obtained by sliding continuously $\delta_2$ along the path from the left most leaf of $T$ to its root as follows
\begin{equation*}
\gamma_T(t)=\begin{cases}
T\circ_1^{1-lt}\delta_2, &0\leqslant t\leqslant \frac{1}{l}, \\
R_k\circ_1^{u_k}\delta_2\circ_2^{lt-(l_k-u_k)} L_k, &
 \frac{l_k-u_k}{l}\leqslant t\leqslant \frac{l_k}{l},\;  1\leqslant k\leqslant r, \\
 R_k\circ_1^{l_{k}+u_k-lt}\delta_2\circ_2^{u_k} L_k, &
 \frac{l_k}{l}\leqslant t\leqslant \frac{l_{k}+u_k}{l},\; 1\leqslant k\leqslant r, \\
\delta_2\circ_2^{lt-(l_r+u_r)} T, &\frac{l_r+u_r}{l}\leqslant t\leqslant 1. \end{cases}
\end{equation*}

A direct inspection shows that it is well defined, continuous and passes to the quotient space  $K_n$. Furthermore,
the following relations hold, $\forall T,S,u\in [0,1] \text{ and } i\geqslant 2$:
\begin{align}
\gamma_{T\circ_i^u S}=&\gamma_T\circ_{i+1}^u S, \label{def:gammacirci} \\
\gamma_{T\circ_1^u S}(t)=&\begin{cases} T\circ_1^u \gamma_S\left(\frac{l}{l(S)}t\right), &0\leqslant t\leqslant \frac{l(S)+u-1}{l}, \\
\gamma_T\left(\frac{t-\frac{l(S)+2u-2}{l}}{1-\frac{l(S)+2u-2}{l}}\right)\circ_2^u S, &\frac{l(S)+u-1}{l}\leqslant t\leqslant 1. \end{cases} \label{def:gammacirc1}
\end{align}

In addition,  $\forall T\in K_n, \forall t\in[0,1],$ 
\begin{equation}\label{gammawithM}
M_{n+1}(\gamma_T(t);e,x_{[n]})=M_n(T;x_{[n]}).
\end{equation}

The paths $\gamma_T$ define the map
$$\begin{array}{cccc}
\Gamma:& K(X,X,P)\times I&\rightarrow &B(X,X,P)\\
&(T;x;x_{[n]},p)\times t&\mapsto& [\gamma_T(t);e,x,x_{[n]},p]\end{array}$$

\medskip

\noindent {\sl Second step: first homotopies.}

\smallskip

\noindent From equation (\ref{def:gammacirci}), $ \forall t\in [0,1] \text{ and } i\geqslant 1$:
$$\Gamma(T\circ_{i+1} S;x,x_{[n]},p)(t)=\Gamma(T;x,x_{1,\ldots,i-1},E_{|S|}(S;x_{i,\ldots,i+|S|-1}),x_{i+|S|,\ldots,n},p)(t).$$
It implies that $\Gamma$ passes to the quotient by the equivalence relation of definition \ref{def:2-sided_bar}  when $i\geqslant 1$. It remains to check the case $i=0$.
From equation (\ref{def:gammacirc1}) and relation (\ref{gammawithM})
\begin{equation*}
\Gamma(T\circ_1 S,x,x_{[n]},p)(t)=\begin{cases} [T;M_{|S|}(S;x,x_{1,\ldots, |S|-1}),x_{|S|,\ldots,n},p] ,&0\leqslant t\leqslant \frac{l(S)}{l}, \\
\Gamma(T,M_{|S|}(S;x,x_{1,\ldots, |S|-1}),x_{|S|,\ldots,n},p)\left(\frac{t-\frac{l(S)}{l}}{1-\frac{l(S)}{l}}\right), &\frac{l(S)}{l}\leqslant t\leqslant 1. \end{cases}
\end{equation*}

Consequently there is a homotopy between $\Gamma(T\circ_1 S)$ and $\Gamma(T)$:
\begin{multline*}
H_{(T\circ_1 S,T)}(x,x_{[n]},p)(t,u)=\\
\begin{cases} [T;M_{|S|}(S;x,x_{1,\ldots, |S|-1}),x_{|S|,\ldots,n},p] ,&0\leqslant t\leqslant (1-u) \frac{l(S)}{l}, \\
\Gamma(T,M_{|S|}(S;x,x_{1,\ldots, |S|-1}),x_{|S|,\ldots,n},p)\left(\frac{t-(1-u)\frac{l(S)}{l}}{1-(1-u)\frac{l(S)}{l}}\right), &(1-u) \frac{l(S)}{l}\leqslant t\leqslant 1. \end{cases}
\end{multline*}

Before continuing the proof, we need some technical facts whose proofs are left to the reader.
For $\alpha_1,\alpha_2,\alpha_3: I\rightarrow Y$ such that $\alpha_1(1)=\alpha_2(0)$ and $\alpha_2(1)=\alpha_3(0)$ and for
$0<a<b<1$, define:
\[\!\!\!
\begin{array}{ccc}
(\alpha_1*^a\alpha_2)(u)=\begin{cases} \alpha_1(\frac{u}{a}), &\!\!\!0\leqslant u\leqslant a, \\
\alpha_2(\frac{u-a}{1-a}), &\!\!\!a\leqslant u\leqslant 1, \end{cases}&\!\!\!\text{ and }\!\!\!&
(\alpha_1*^a\alpha_2*^b\alpha_3)(u)=\begin{cases} \alpha_1(\frac{u}{a}), &\!\!\!0\leqslant u\leqslant a, \\
\alpha_2(\frac{u-a}{b-a}), &\!\!\!a\leqslant u\leqslant b, \\
\alpha_3(\frac{u-b}{1-b}), &\!\!\!b\leqslant u\leqslant 1. \end{cases}
\end{array}
\]
The following relation holds:
\begin{equation*}
(\alpha_1*^{\frac{a}{b}}\alpha_2)*^b\alpha_3=\alpha_1*^a\alpha_2*^b\alpha_3=\alpha_1*^a(\alpha_2*^{\frac{b-a}{1-a}}\alpha_3).
\end{equation*}
For $0<a<b<c$, let $x_{c;a,b}:=\frac{a(c-b)}{b(c-a)}\in ]0,1[$. For any $0<a_1<a_2<a_3<c$, 
\begin{multline}\label{F:fond}
(\alpha_1*^{x_{c;a_1,a_2}}\alpha_2)*^{x_{c;a_2,a_3}}\alpha_3=\\
\alpha_1*^{x_{c;a_1,a_3}}\alpha_2*^{x_{c;a_2,a_3}}\alpha_3=\alpha_1*^{x_{c;a_1,a_3}}(\alpha_2*^{x_{c-a_1;a_2-a_1,a_3-a_1}}\alpha_3)
\end{multline}
Consequently:
\begin{multline}\label{F:H}
H_{(T,T_3)}(x,x_{[n]},p)(t)=\\
H_{(T,T_3\circ_1 T_2)}(x,x_{[n]},p)(t)*^{x_{l;l_1,l_1+l_2}}H_{(T_3\circ_1 T_2,T_3)}(M_{|T_1|}(x,x_{1,\ldots,|T_1|-1}),x_{|T_1|,\ldots,n},p)(t),
\end{multline}
where $T=T_3\circ_1T_2\circ_1 T_1$, $l_1=l(T_1)$, $l_2=l(T_2)$, $l=l(T)$ and 
where the composition of paths is relative to the variable $u$ in the definition of $H$.

Note that for any trees $T,S,V$ and $i\geqslant 1$ one has
\begin{multline}\label{def:Hcirci}
H_{(T\circ_1 S,T)}(x,x_{1,\ldots,i-1},E_{|V|}(V;x_{i,\ldots,i+|S|-1}),x_{i+|S|,\ldots,n},p)=\\
\begin{cases}
H_{((T\circ_1 S)\circ_{i+1} V,T)}(x,x_{[n]},p), &1\leqslant i+1\leqslant |S|,\\
H_{((T\circ_1 S)\circ_{i+1} V,T\circ_{i+1-|S|} V)}(x,x_{[n]},p), &|S|+1\leqslant i+1\leqslant |T|+|S|-1.\end{cases}
\end{multline}

\noindent{\sl Last step: construction of $\tilde\Gamma$.}

In this step we will use induction to build homotopies
$\Psi_n: K_{n}(X,X,P)\times I\times I\rightarrow B(X,X,P)$ satisfying:

\begin{align}\label{condi}
\begin{split}
\Psi_{n+2}(T;x,x_{[n]},p)(t,0)&=\Gamma(T;x,x_{[n]},p)(t), \\
\Psi_{n+2}(T;x,x_{[n]},p)(0,u)&=[T;x,x_{[n]},p],   \\
\Psi_{n+2}(T;x,x_{[n]},p)(1,u)&= [\delta_2;e,N_{n+2}(T;x,x_{[n]},p)],  
\end{split}
\end{align}

\begin{align}
\Psi_{n+2}(T\circ_{i+1} S;x,x_{[n]},p)&=
 \Psi_{|T|}(T;x,x_{1,\ldots,i-1},E_{|S|}(S;x_{i,\ldots,i+|S|-1}),x_{i+|S|,\ldots,n},p),\label{condcirci} 
 \end{align}
for $i>0$ and
\begin{align}
\Psi_{n+2}(T\circ_1 S,x,x_{[n]},p)(t,1)&= \Psi_{|T|}(T;M_{|S|}(x,x_{1,\ldots,|S|-1}),x_{|S|,\ldots,n},p)(t,1). \label{condcirc1}
\end{align}
As a consequence the path $\tilde\Gamma:K(X,X,P)\times I\rightarrow B(X,X,P)$ defined by 
$$\tilde\Gamma(T;x,x_{[n]},p)(t)=\Psi_{|T|}(T;x,x_{[n]},p)(t,1)$$ is well defined on $B(X,X,P)$ and provides the required deformation retract.

We start by $\Psi_2(\delta_2;x,p)(t,u)=\Gamma(\delta_2;x,p)(t).$ Let us assume that $\Psi_{k}$ is defined for $k\leqslant n+1$ and satisfies the above 
conditions.

\medskip

Since the inclusion of the boundary $\partial K_n$ into $K_n$ is a cofibration, then the map
$$\partial K_{n+2}\times X^{n+1}\times P\times I\cup K_{n+2}\times X^{n+1}\times P\times \partial I\rightarrow K_n(X,X,P)\times I$$
is also a cofibration. Then using the homotopy extension property, it is enough to define $\Psi$ on
$$Y_n=(\partial K_{n+2}\times X^{n+1}\times P\times I\cup K_{n+2}\times X^{n+1}\times P\times \partial I)\times I\cup K_n(X,X,P)\times I\times\{0\}. $$

Equations (\ref{condi}), and (\ref{condcirci}) implies that  $\Psi$ is defined by induction on $Y_n$ 
except for the elements of the form $(T\circ_1 S,x,x_{[n]},p)$. We then define
\begin{multline*}
\Psi_{n+2}(T\circ_1 S,x,x_{[n]},p)(t)=\\
H_{(T\circ_1 S,T)}(x,x_{[n]},p)(t)*^{x_{l;l(S),l-1}}\Psi_{|T|}(T;M_{|S|}(S;x,x_{1,\ldots,|S|-1}),x_{|S|,\ldots,n},p)(t).
\end{multline*}
Since $l=l(T)+l(S)$ and $l(T)\geqslant 2$, then $l(S)<l-1<l$ and the composition is well defined because
\begin{multline*}
H_{(T\circ_1 S,T)}(x,x_{[n]},p)(t,1)=\Psi_{|T|}(T;M_{|S|}(S;x,x_{1,\ldots,|S|-1}),x_{|S|,\ldots,n},p)(t,0)=\\
\Gamma(T;M_{|S|}(S;x,x_{1,\ldots,|S|-1}),x_{|S|,\ldots,n},p)(t).
\end{multline*}
Moreover, relations (\ref{F:fond}) and (\ref{F:H}) imply that this definition is independent of the decomposition of a tree  
$T=T_1\circ_1 S_1=T_2\circ_1 S_2$. Similarly relations (\ref{condcirci}) and  (\ref{def:Hcirci})  imply that this definition is independent of the decomposition of a tree  
$T=T_1\circ_1 S_1=U\circ_{i+1} V$, with $i>0$.
Finally, one can check that this definition satisfies (\ref{condi}),  and  (\ref{condcirc1}). \end{proof}

The following corollary is obtained by projecting onto $B(\ast,X,P)$ the inverse of $\tilde\Gamma$.

\begin{coro}\label{cor:alpha} For any $A_\infty$-action $X \acinfty P$, there exists a map 
$$\alpha_P: B(X,X,P)\rightarrow \mpath(B(\ast,X,P))$$
such that
\begin{align*}
\alpha_P([T;x;x_{[n]},p])(0)=&[\delta_2;\ast,N_{n+2}(T;x;x_{[n]},p)]\\
\alpha_P([T;x;x_{[n]},p])(1)=&[T;\ast;x_{[n]},p]\\
\alpha_P([\delta_2,x,p])(t)=&\begin{cases} [\delta_2\circ_2^{1-2t}\delta_2;\ast,x,p],& 0\leqslant t\leqslant \frac{1}{2}, \\
[\delta_2\circ_1^{2t-1}\delta_2;\ast,x,p],& \frac{1}{2}\leqslant t\leqslant 1. \end{cases}
\end{align*}
\end{coro}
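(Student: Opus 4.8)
The plan is to obtain $\alpha_P$ directly from the deformation retraction $\tilde\Gamma$ constructed in the proof of Theorem~\ref{thm:def_retract}. Recall that $\tilde\Gamma : B(X,X,P) \times I \to B(X,X,P)$ is a homotopy from the identity to the map $[T;x,x_{[n]},p] \mapsto [\delta_2;e,N_{n+2}(T;x,x_{[n]},p)]$, and it is built so that $\tilde\Gamma(T;x,x_{[n]},p)(0) = [T;x,x_{[n]},p]$ and $\tilde\Gamma(T;x,x_{[n]},p)(1) = [\delta_2;e,N_{n+2}(T;x,x_{[n]},p)]$, passing to the quotient by $\equiv$. First I would compose $\tilde\Gamma$ with the natural projection $B(X,X,P) \to B(\ast,X,P)$ induced by collapsing the $Q = X$ slot to $\ast$ (i.e. sending $[T;x,x_{[n]},p] \mapsto [T;\ast,x_{[n]},p]$, which is well defined since the equivalence relation involving the first leaf is compatible with the trivial action on $\ast$). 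This yields a path-valued map whose value at the parameter $0$ is $[T;\ast,x_{[n]},p]$ and at $1$ is $[\delta_2;\ast,N_{n+2}(T;x,x_{[n]},p)]$.

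Next I would \emph{reverse the time parameter}: define $\alpha_P([T;x,x_{[n]},p])(s)$ to be the projected path run backwards, so that it starts at $[\delta_2;\ast,N_{n+2}(T;x,x_{[n]},p)]$ (the value at $s=0$) and ends at $[T;\ast,x_{[n]},p]$ (the value at $s=1$), matching the two stated endpoint conditions. Since all paths here are honest paths in $I$-time but the target is $\mpath(B(\ast,X,P))$, I would reparametrise the $I$-time path to a Moore path (e.g. of length $1$, or of whatever length is convenient), using the fact that the constant-tail Moore path space receives $I$-paths in the obvious way; this is a routine bookkeeping step. The well-definedness on $B(X,X,P)$ — i.e. compatibility with $\equiv$ — is inherited from the corresponding property of $\tilde\Gamma$, which was exactly the content of the ``last step'' of the previous proof.

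The only substantive point is the explicit formula for $\alpha_P([\delta_2,x,p])$: on the basic cell $K_2 \times X \times P$ we must check that reversing the projected $\tilde\Gamma$ recovers the prescribed expression
\[
\begin{cases} [\delta_2\circ_2^{1-2t}\delta_2;\ast,x,p], & 0\leqslant t\leqslant \tfrac12, \\ [\delta_2\circ_1^{2t-1}\delta_2;\ast,x,p], & \tfrac12\leqslant t\leqslant 1. \end{cases}
\]
Here I would unwind the construction: $\Psi_2(\delta_2;x,p)(t,u) = \Gamma(\delta_2;x,p)(t) = [\gamma_{\delta_2}(t);e,x,p]$, and $\gamma_{\delta_2}$ is the path sliding $\delta_2$ from the left leaf to the root of $\delta_2$, which by the explicit formula for $\gamma_T$ (with $T=\delta_2$, $l=2$, no internal edges, $r=0$) is $\gamma_{\delta_2}(t) = \delta_2\circ_1^{1-2t}\delta_2$ for $0 \le t \le \tfrac12$ and $\gamma_{\delta_2}(t) = \delta_2\circ_2^{2t-1}\delta_2$ for $\tfrac12 \le t \le 1$. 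Projecting ($e \mapsto \ast$ in the $Q$-slot) and reversing time then gives precisely the claimed formula after relabelling.

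The main obstacle I expect is not conceptual but notational: keeping the reparametrisations consistent so that the Moore-path lengths, the $\circ_1$-versus-$\circ_2$ bookkeeping, and the time-reversal all line up with the stated endpoint normalisations, and in particular verifying that the projection $B(X,X,P) \to B(\ast,X,P)$ genuinely respects the equivalence relation on the first leaf (which it does because that relation only ever replaces subtrees via $R_k$, and on $\ast$ every $R_k$ is trivial). Once these are checked, the corollary follows immediately by setting $\alpha_P = (\mathrm{proj}) \circ \tilde\Gamma$ with reversed time, reparametrised as a Moore path.
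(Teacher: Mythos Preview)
Your proposal is correct and follows exactly the approach the paper takes: the paper's one-line proof simply says the corollary is ``obtained by projecting onto $B(\ast,X,P)$ the inverse of $\tilde\Gamma$'', which is precisely your construction of composing $\tilde\Gamma$ with the projection and reversing time. Your unwinding of the $T=\delta_2$ case and your check that the projection respects the equivalence relation are the natural details one fills in, and they are correct.
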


In the next theorem we state, without proof, another well known and important property of the $2$-sided bar construction. Before stating the theorem, let us recall that an $H$-space $X$ is called \emph{grouplike} when its product induces a group structure on $\pi_0(X)$. When the $H$-space in question is an $A_\infty$-space, then the grouplike property implies that all possible translations \[ M_n(T;x_{1, \dots, i-1}, \underline{\ \ }, x_{i+1, \dots, n}) : X \to X \]  
are weak equivalences, for all $n \geqslant 2, T \in K_n$ and $x_j \in X$.

\begin{thm}
If $X$ is grouplike, then $B(X,X,P) \to B(\ast,X,P)$ is a quasi-fibration.
\end{thm}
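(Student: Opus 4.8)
The plan is to realize $B(X,X,P) \to B(\ast,X,P)$ as a map whose fibers are all homotopy equivalent to $P$, and then to apply the classical Dold–Thom criterion for quasi-fibrations: a surjection $p \colon E \to Y$ is a quasi-fibration provided $Y$ can be covered by open sets $U$ such that $p|_{p^{-1}(U)} \colon p^{-1}(U) \to U$ is a quasi-fibration, and there are the usual patching conditions (an increasing filtration of $Y$ on which $p$ is a quasi-fibration over each piece, with the inclusions inducing fiber homotopy equivalences, suffices). Concretely, I would filter $B(\ast,X,P)$ by the subspaces $B_n(\ast,X,P)$ coming from the filtration already introduced after Definition~\ref{def:2-sided_bar}, and analyze $B(X,X,P)$ by the pulled-back filtration. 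The fiber over the image of $[\delta_2;\ast,p]$ is $P$ itself (via the embedding $p \mapsto [\delta_2;e,p]$ from Theorem~\ref{thm:def_retract}); more generally I would argue that over each point the fiber is $B(X,X,\ast)$-twisted copy of $P$, which is contractible onto $P$ by the same sliding-vertices deformation retract argument, \emph{provided} the translations of $X$ are invertible up to homotopy — this is exactly where grouplikeness enters.

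The key steps, in order, are: (1) Set up the relevant filtration: the ``length'' function $l(T)$ and the comb decomposition give a stratification of $B(\ast,X,P)$ by $B_n$, and it suffices to show $p$ is a quasi-fibration over each $B_n \setminus B_{n-1}$ and over a neighborhood $U_n$ of $B_{n-1}$ in $B_n$, and that the inclusion-induced maps on fibers are weak equivalences. (2) Over the top stratum $B_n \setminus B_{n-1}$ the map $p$ is, up to homeomorphism, a product projection $(\text{open cell}) \times P \to (\text{open cell})$ twisted only by reparametrizing trees, so it is a fibration there. (3) Construct the neighborhood $U_n$ of $B_{n-1}$ inside $B_n$ together with a deformation retraction $U_n \to B_{n-1}$ covered by a map $\tilde U_n \to p^{-1}(B_{n-1})$; the covering homotopy is built by sliding the first leaf inward along the tree, reusing the paths $\gamma_T$ and the maps $\sigma$ of Lemma~\ref{lemma:family_of_maps} and Theorem~\ref{thm:def_retract}. (4) Check that the resulting map on fibers is a weak equivalence: the fiber over a point hit by a tree with an inner leaf carrying $x \in X$ changes by a translation $M_n(T; \dots, x, \dots)$ acting on the $P$-coordinate through the action maps $N_k$, and grouplikeness guarantees (as recalled in the statement preceding the theorem) that these translations are weak equivalences, so the fiber-inclusion is a weak equivalence. (5) Patch via the Dold–Thom gluing lemma to conclude $p$ is a quasi-fibration over $B(\ast,X,P) = \bigcup_n B_n$, using that $B(\ast,X,P)$ carries the limit topology.

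The main obstacle, I expect, is step (4) together with the bookkeeping in step (3): one must produce the neighborhood $U_n$ and the covering deformation in a way that is simultaneously compatible with \emph{all} the operadic identifications $\equiv$ defining the quotient $B$, exactly the kind of coherence difficulty that forced the three-stage $\Gamma \rightsquigarrow \tilde\Gamma$ argument in the proof of Theorem~\ref{thm:def_retract}; the homotopy-extension-property trick there (extending over the cofibration $\partial K_{n} \hookrightarrow K_n$) is the template to follow. The place where the hypothesis is genuinely used is isolating the translations $M_n(T;x_{1,\dots,i-1},\underline{\ \ },x_{i+1,\dots,n})$ as weak equivalences: without grouplikeness the inclusion $B_{n-1} \hookrightarrow B_n$ need not induce a fiber homotopy equivalence, the Dold–Thom patching fails, and one only gets a map that is a quasi-fibration over each open stratum but not globally. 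I would therefore present the argument as: stratify, trivialize on strata, build neighborhoods with covering homotopies by sliding leaves, invoke grouplikeness to upgrade fiber-inclusions to weak equivalences, and glue.
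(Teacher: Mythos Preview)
The paper does not prove this theorem: it is stated explicitly as ``without proof'' and treated as a well-known fact about the two-sided bar construction. So there is no paper proof to compare your proposal against.

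That said, your outline is the standard one (Dold--Thom/May-style filtration argument) and is the right template. A couple of remarks on the sketch itself: the fiber of $B(X,X,P)\to B(\ast,X,P)$ over a point is $X$, not $P$ --- you are replacing the first $\ast$-slot by an element of $X$, so the preimage of $[\delta_2;\ast,p]$ is $\{[\delta_2;x,p]:x\in X\}$. Correspondingly, in step~(4) the map on fibers induced by collapsing an internal edge of length~$1$ adjacent to the first leaf is a translation in $X$ (left multiplication by some $M_{|S|}(S;\ldots)$), and grouplikeness is used to say that this self-map of $X$ is a weak equivalence. Your identification of where the hypothesis enters is correct, but the fiber is $X$, not $P$.
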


\begin{thm} If $X$ is  grouplike, then the usual map $f:X \to \mloop (BX)$ is a weak equivalence.
\end{thm}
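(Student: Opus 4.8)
The plan is to deduce the weak equivalence from the two fundamental structural facts already assembled: the quasi-fibration $B(X,X,P)\to B(\ast,X,P)$ of the previous theorem, applied in the special case $P=\ast$, and Theorem~\ref{thm:def_retract}, which shows $B(X,X,\ast)$ is contractible (it deformation retracts onto $\ast$). Combining these, $B(X,X,\ast)\to B(\ast,X,\ast)=BX$ is a quasi-fibration with contractible total space, so its fiber $F$ over the basepoint has $\pi_i(F)\cong\pi_{i+1}(BX)$ for all $i$ via the long exact sequence of a quasi-fibration. The task is then to identify this fiber, up to weak equivalence, with $X$ in a way compatible with the usual map $f$.

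\medskip

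\noindent\textbf{Key steps, in order.} First I would apply Corollary~\ref{cor:alpha} in the case $P=\ast$: this produces a map $\alpha_\ast\colon B(X,X,\ast)\to\mpath(BX)$ with $\alpha_\ast([\delta_2;x,\ast])$ equal to the loop $f(x)$ traversed at double speed, and with endpoints $\alpha_\ast([T;x;x_{[n]},\ast])(0)=[\delta_2;\ast,\ast]=\ast$ (the basepoint of $BX$, since the left action on $\ast$ is trivial) and $\alpha_\ast([T;x;x_{[n]},\ast])(1)=[T;\ast;x_{[n]},\ast]$. Thus $\alpha_\ast$ restricts, over the subspace of $B(X,X,\ast)$ mapping to the basepoint of $BX$ — i.e.\ over the fiber $F$ — to a map $F\to\mloop(BX)$. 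Second, I would observe that $B(X,X,\ast)\to\mpath(BX)$ together with the projection $B(X,X,\ast)\to BX$ realizes $B(X,X,\ast)$ as a model for the path–loop fibration: the evaluation-at-$1$ target is $[T;\ast;x_{[n]},\ast]$, which is precisely the image of $[T;x;x_{[n]},\ast]$ under the projection $B(X,X,\ast)\to B(\ast,X,\ast)$. Hence $\alpha_\ast$ is a map of quasi-fibrations over $BX$ from $B(X,X,\ast)\to BX$ to $\mpath(BX)\to BX$, covering the identity on the base; since both total spaces are contractible ($\mpath(BX)$ being contractible as a space of paths starting at $\ast$), the five-lemma applied to the long exact sequences of the two quasi-fibrations shows the induced map on fibers $F\to\mloop(BX)$ is a weak equivalence. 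Third, I would identify $F$ with $X$: the inclusion $X\into B(X,X,\ast)$ from Theorem~\ref{thm:def_retract}, $x\mapsto[\delta_2;x,\ast]$, lands in $F$ (again because the action on $\ast$ is trivial, its image under $B(X,X,\ast)\to BX$ is the constant point), and its composite with $\alpha_\ast|_F$ is exactly the usual map $f$ up to the standard reparametrization of $[\delta_2\circ_2^{1-2t}\delta_2;\ast,x,\ast]$ versus $[\delta_2\circ_2^{1-t}\delta_2;\ast,x,\ast]$, which is a homotopy. So it remains to show $X\into F$ is a weak equivalence.

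\medskip

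\noindent\textbf{The main obstacle.} The delicate point is Step three — showing the inclusion $X\into F$ is a weak equivalence, equivalently that $F$ is a weak deformation retract of (or weakly equivalent to) the whole of $B(X,X,\ast)$ restricted appropriately. The grouplike hypothesis enters here exactly as in Theorem~\ref{thm:def_retract}'s discussion: one uses that all translations $M_n(T;\dots,\underline{\ \ },\dots)\colon X\to X$ are weak equivalences to run the usual filtration argument on $B(X,X,\ast)$ (by the number $n$ of middle variables), comparing the $n$-th stage $B_n$ to $B_{n-1}$ and to $X$ via the structure maps, and noting the relevant pushouts along cofibrations $\partial K_{n+2}\into K_{n+2}$ are homotopy pushouts. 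Alternatively, and more cleanly, I would invoke the deformation retraction $\tilde\Gamma$ of Theorem~\ref{thm:def_retract} with $P=\ast$ directly: it retracts $B(X,X,\ast)$ onto $\ast\subset B(X,X,\ast)$ through maps that, by the endpoint conditions \eqref{condi} and \eqref{condcirc1}, track the basepoint of $BX$ coherently, so that the homotopy stays fiberwise over $BX$ after composing with $\alpha_\ast$; unwinding this yields precisely the commuting square expressing $f$ as the fiber inclusion of a map of contractible quasi-fibrations, which then must be a weak equivalence by the long exact sequence and the five lemma. Either route reduces everything to facts already established in the excerpt, with the grouplike condition invoked only through the previous theorem on quasi-fibrations and through the well-definedness of the long exact sequence argument.
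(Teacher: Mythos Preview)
Your approach is exactly the paper's: apply Corollary~\ref{cor:alpha} with $P=\ast$ to get $\alpha_\ast:B(X,X,\ast)\to\mpath^\ast(BX)$ covering the identity on $BX$, note both total spaces are contractible, and run the five lemma on the two long exact sequences. The paper compresses all of this into three sentences.

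Where you diverge is in treating ``$X\hookrightarrow F$ is a weak equivalence'' as the main obstacle. In fact the actual fibre over the basepoint \emph{is} $X$. The defining relations of the bar construction are confluent (they just collapse length-$1$ internal edges and apply the corresponding structure map), so every element has a unique reduced representative. If $[T;\ast,x_{[n]},\ast]=[\delta_2;\ast,\ast]$ in $BX$, then the \emph{same} sequence of collapses applied to $[T;x,x_{[n]},\ast]$ in $B(X,X,\ast)$ yields $[\delta_2;x',\ast]$ for some $x'\in X$ (the collapses at position $1$ replace $x$ by $M_{|\tau|}(\tau;x,\dots)$, the others leave it alone). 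Hence $F=X$ on the nose, and the paper's ``usual five lemma argument'' applies directly with $X$ as the fibre. Your filtration route (a) would work but is unnecessary.

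Your route (b) does not work as stated: the deformation $\tilde\Gamma$ is \emph{not} fibrewise over $BX$. Projecting $\tilde\Gamma([T;x,x_{[n]},\ast],t)$ to $BX$ gives a path from $[T;\ast,x_{[n]},\ast]$ to the basepoint, not a constant path; this is precisely what makes $\alpha_\ast$ land in $\mpath^\ast(BX)$ rather than in $\mloop(BX)$.
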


\begin{proof}
Corollary \ref{cor:alpha} applied to $P=\ast$ with the trivial $A_\infty$-action implies that $\alpha_\ast: B(X,X,\ast)\to\mpath^{\ast}(BX)$ is a weak equivalence since
 $B(X,X,\ast)$ is contractible as well as $\mpath^{\ast}(BX)$. Moreover $\alpha_\ast$ restricted to $X$ has value in $\mloop(BX)$ and is homotopic to the usual map. Finally $B(X,X,\ast) \to B(\ast,X,\ast)$ is a quasi-fibration and the usual five lemma argument shows that the usual map $X \to \mloop (BX)$ is a weak equivalence.
\end{proof}

\subsection{$A_\infty$-equivariance}\label{sub:equivariance}
Let us now describe the notion of $A_\infty$-equivariance for maps between spaces admitting $A_\infty$-actions. Our approach will be analogous to the one given for $A_\infty$-maps in section \ref{sub:ainftymaps}. 

\begin{defi}\label{def:ainftyequivariance}
Consider an $A_\infty$-space $(X,\{ M_n \}_{n \geqslant 0})$ and an $X$-space $P$ with structure maps $\{ N_j : K_j \times X^{j-1} \times P \to P \}_{j \geqslant 1}$ and a monoid $Y$ with a monoid action $Y \acts Q$. We say that a map $F : P \to Q$ is $A_\infty$-equivariant with respect to an $A_\infty$-map $\{ f_i : K_{i+1} \times X^i \to Y \}_{i \geqslant 1}$ if there is a family $\{ F_n : K_{n+1} \times X^{n-1} \times P \to Q \}_{n \geqslant 1}$ such that $F_1 = F$ and, for any $\tau \in K_{i+1}$ and $\rho \in K_{j+1}$, satisfies the following conditions:
\begin{equation*}
F_n(\tau \circ_k \rho; x_{[n-1]},p) = \left\{
    \begin{array}{ll}
      \!\!\! F_i(\tau; x_{1,\ldots,k-2},M_{j+1}(\rho;x_{k-1, \dots, k+j-1}),x_{k+j,\ldots,n-1},p), 
      &  2 \leqslant k \leqslant i; \\ 
      \!\!\! F_i(\tau; x_{1,\dots,i-1},N_{j+1}(\rho;x_{i,\dots,n-1},p)), & k = i+1; \\
      \!\!\! f_i(\rho; x_{1,\dots,j}) \cdot F_j(\tau;x_{j+1,\dots,n-1},p), & k = 1. 
    \end{array}\right.  
\end{equation*}
where $i+j = n$ and the action $M \acts Q$ is denoted by: $m \cdot q$.
\end{defi}

Any $A_\infty$-action $X \acinfty P$ can be canonically extended to an action on the (unreduced) cone $CP$. Since the action $X \acinfty P$ in general does not fix the base point $p_0 \in P$, we need to work with the unreduced cone. 
The natural embedding $P \into \mloop(CP \hquotient X,P \hquotient X)$ is defined by
\[ p\mapsto \gamma_p(t)=\left[ \delta_2; \ast, [p,t]\right],\]
where we recall that $P\hquotient X:=B(\ast,X,P)$.
We will show that the natural embedding  is $A_\infty$-equivariant with respect to  $X \to \mloop B(\ast,X,CP)$, the $A_\infty$-map obtained as the composition:
\begin{equation}\label{eq:natural_map}
   X \to \mloop B(\ast,X,\ast) \hookrightarrow \mloop B(\ast,X,CP).
\end{equation}
where $X \to \mloop BX$ is the usual map of Theorem \ref{thm:usual_map}.


\begin{prop}\label{prop:equivariance}
 The natural embedding $P \into \mloop(CP \hquotient X,P \hquotient X)$ sending $p$ to $\gamma_p$ is $A_\infty$-equivariant with respect to the $A_\infty$-map $X \to \mloop(CP \hquotient X)$ defined by 
 \emph{(\ref{eq:natural_map})}. 
\end{prop}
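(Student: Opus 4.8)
The plan is to construct explicitly the family of higher equivariance homotopies $\{F_n : K_{n+1}\times X^{n-1}\times P \to \mloop(CP\hquotient X, P\hquotient X)\}_{n\geqslant 1}$ with $F_1$ equal to the natural embedding $p\mapsto \gamma_p$, and to check that they satisfy the three cases of Definition \ref{def:ainftyequivariance} relative to the $A_\infty$-map of \eqref{eq:natural_map}. The natural source of these homotopies is the same sliding-vertices mechanism used in Theorem \ref{thm:usual_map}: we already have the family of maps $\sigma_n : K_n\times\mathbb R^+\to K_{n+1}$ from Lemma \ref{lemma:family_of_maps}, and it is convenient to re-use them to produce the Moore paths underlying $F_n$. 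Concretely I would set
\begin{equation*}
F_n(T;x_{[n-1]},p)(t) = \bigl[\,\sigma_{n+1}(\back{T})(t)\,;\ast, x_{[n-1]},[p,\,\phi(t)]\,\bigr],
\end{equation*}
where $\sigma_{n+1}(\back T)$ is applied to the deshift (as in Proposition \ref{prop:A_infty_action_maps}, since the left $A_\infty$-action uses deshifts), the bracket now denotes a class in $B(\ast,X,CP)$, and $\phi(t)$ is a cone-coordinate function chosen so that at the two ends of the path $\sigma_{n+1}(\back T)$ — where the extra leaf $\delta_2$ is grafted to the root and carries $[p,\phi]$ — the cone parameter degenerates correctly: $\phi$ should be $1$ (apex) over the portions of the path that must land in the subspace $CP\hquotient X$ and should collapse so that the endpoints of $F_n(T;x_{[n-1]},p)$ lie in $P\hquotient X = B(\ast,X,P)$, making it a relative Moore loop. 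The length of $F_n(T;x_{[n-1]},p)$ is $l(\back T)=l(T)$, which is compatible with Moore-loop composition.

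The verification then proceeds case by case. For $2\leqslant k\leqslant i$, the relation $F_n(\tau\circ_k\rho;\cdots)=F_i(\tau;\ldots,M_{j+1}(\rho;\cdots),\ldots)$ follows from property \eqref{alig:circ_i} of $\sigma$ — which says $\sigma(T_1\circ_{i}T_2)=\sigma(T_1)\circ_{i}T_2$ for inner grafts — combined with the identifications in $B(\ast,X,CP)$ that replace an inner $\rho$-subtree by $M_{j+1}(\rho;\cdots)$; this is the exact analogue of how \eqref{eq:amap1} was checked in Theorem \ref{thm:usual_map}. For $k=i+1$, i.e.\ grafting $\rho$ onto the last leaf, after deshift this becomes a graft onto the \emph{first} leaf of $\back{\tau}$, so one uses property \eqref{alig:circ_1}, and the last-leaf position in the bar construction is now occupied by the left-action symbol $N$, so the identification absorbs $\rho$ into $N_{j+1}(\rho;x_{i,\dots,n-1},p)$ in the cone coordinate; this is where the relative-loop structure enters and is the reason we needed the cone rather than a naive basepoint fix. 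For $k=1$, grafting $\rho$ onto the first leaf becomes, after deshift, a graft onto the last leaf, which by \eqref{alig:circ_1} splits $\sigma$ as a juxtaposition of two paths — precisely the juxtaposition that expresses $F_n(\tau\circ_1\rho;\cdots)$ as $f_j(\rho;x_{1,\dots,j})\cdot F_j(\tau;x_{j+1,\dots,n-1},p)$, with $f_j$ the components of the usual map appearing via \eqref{eq:natural_map} (here the $\rho$-part of the path moves only the $\ast$-labelled leaves and the apex, so it sits inside $\mloop B(\ast,X,\ast)\into \mloop B(\ast,X,CP)$, reproducing the usual map).

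The main obstacle I expect is pinning down the cone-coordinate bookkeeping so that everything is simultaneously well-defined on the quotient $B(\ast,X,CP)$, continuous, compatible with the degeneracies ($\delta_0$ / unit insertions), and such that $F_n(T;x_{[n-1]},p)$ genuinely is a path in the \emph{pair} $(CP\hquotient X, P\hquotient X)$ with endpoints as prescribed by \eqref{def:ainftyequivariance} — i.e.\ $F_n(T;\cdots)(0)=[\delta_2;\ast,N_{n+1}(T;x_{[n-1]},p)]$-type endpoint corrected by the cone, and the other endpoint in $P\hquotient X$. As in Theorem \ref{thm:def_retract}, if a direct formula for the cone parameter turns out not to pass to the quotient on the nose, one falls back on the homotopy extension property along the cofibration $\partial K_{n+1}\times X^{n-1}\times P\into K_{n+1}\times X^{n-1}\times P$ to inductively rectify $F_n$ to a quotient-compatible $\widetilde F_n$ while keeping the boundary data \eqref{condcirci}-style fixed; the technical path-reparametrization identities \eqref{F:fond}–\eqref{F:H} already in hand are exactly what make such a rectification coherent. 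Modulo that bookkeeping, the proof is a routine translation of Theorem \ref{thm:usual_map} into the relative (cone) setting, so I would present the explicit $F_n$, state the three identities, and indicate that their proofs are the evident adaptations of the arguments given for the usual map.
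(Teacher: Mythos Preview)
Your proposal has a concrete arity mismatch that breaks the construction. You take $T\in K_{n+1}$, so $\back T\in K_{n+1}$, and then $\sigma_{n+1}(\back T)(t)\in K_{n+2}$; but your bracket supplies only $n+1$ inputs ($\ast$, $x_1,\dots,x_{n-1}$, $[p,\phi(t)]$), not $n+2$. There is no cheap repair: the extra leaf that $\sigma$ manufactures was tailor-made for $B(\ast,X,\ast)$, where both end slots carry the trivial $\ast$ and can absorb the sliding leaf. Here the rightmost slot carries a genuine element of $CP$, so you cannot simply add a leaf that slides past it. Relatedly, your deshift bookkeeping in the case analysis is off: by \eqref{def:deshift} one has $\back{\tau\circ_{i+1}\rho}=\back\tau\circ_i\rho$ (not a graft onto the first leaf) and $\back{\tau\circ_1\rho}=\back\rho\circ_{|\rho|}\back\tau$, so the three cases do not line up with \eqref{alig:circ_i}/\eqref{alig:circ_1} the way you describe.

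What the paper does instead is build a \emph{new} family $\lambda_n:K_n\to\mpath(K_n)$ (same arity, no extra leaf) satisfying $\lambda(T)(0)=\delta_2\circ_2 s_{|T|}(\back T)$ and $\lambda(T)(|\lambda(T)|)=T$, together with $\circ_i$- and $\circ_1$-compatibilities analogous to \eqref{alig:sigma} but with $s_{|T_1|}(\back T_1)$ in place of $\back T_1$. The degeneracy $s_{|T|}$ is exactly what makes the starting point collapse to the basepoint once you plug in the cone apex $[p,0]$ (which is fixed by the extended action). Then $F_n(T;x_{[n-1]},p)$ is the \emph{juxtaposition} of two pieces: first $t\mapsto[\lambda(T,t);\ast,x_{[n-1]},[p,0]]$ (cone coordinate frozen at $0$), then the straight cone path $t\mapsto[T;\ast,x_{[n-1]},[p,t]]$. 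No mysterious $\phi$ is needed, and no HEP rectification either---$\lambda$ passes to the quotient directly. The recursive construction of $\lambda$ is modeled on that of $\sigma$ but requires a separate treatment of trees whose root vertex has valence $2$; this is the part of the argument that goes beyond ``routine translation'' of Theorem \ref{thm:usual_map}.
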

\begin{proof}
The proof will follow the lines of the proof of theorem \ref{thm:usual_map}

We will exhibit a family of maps 
\[ F_n : K_{n+1} \times X^{n-1} \times P \to {\mpath}^{\ast}(CP \hquotient X), \quad {\rm for } \quad n \geqslant 1. \] 
To any $(T;x_{[n-1]},p) \in K_{n+1} \times X^{n-1} \times P$, the map $F_n$ associates the following juxtaposition of paths:
\begin{equation*}
 \Lambda(T;x_{[n-1]},p) \cdot  \big[T;*,x_{[n-1]},[p,t] \big]
\end{equation*}
where $\Lambda$ defines the path $\Lambda(T;x_{[n-1]},p)(t) = \big[ \lambda(T,t); \ast, x_{[n-1]},[p,0] \big]$, where $\lambda$ is defined as a family of maps 
$\lambda_n : K_n \to \mpath(K_n)$ whose construction is well tailored to ensure 
$\{ F_n \}_{n \geqslant 1}$ has the properties of definition \ref{def:ainftyequivariance}. More precisely, $\lambda(T)$ satisfies:
\begin{align} \label{alig:lambda} 
\begin{split}
   & \lambda(T)(0) = \delta_2 \circ_2 s_{|T|}(\back T) \ \mbox{ and } \ \lambda(T)(|\lambda(T)|) = T;     \\ 
   & \lambda(T_1 \circ_i T_2) = \lambda(T_1) \circ_i T_2,  
     \quad \mbox{ for } \quad 1<i \leqslant |T_1|;   \\ 
   & \lambda(T_1 \circ_1 T_2) = (\sigma(T_2) \circ_{|T_2| + 1} s_{|T_1|}(\back T_1) ) \cdot 
                  (\lambda(T_1) \circ_1 T_2), 
\end{split}
\end{align} 
These conditions ensure that $\{ F_n \}_{n \geqslant 1}$ has the properties of definition \ref{def:ainftyequivariance}.

As in the proof of lemma \ref{lemma:family_of_maps}, we will construct the family $\lambda_n : K_n \to \mpath(K_n)$ recursively. However, the case in which the vertex adjacent to the root has valence (i.e., number of incoming edges) equal to $2$ will be treated separately. 

{\sl First case:} We define $\lambda(T)$ for trees $T$ whose valence of the vertex adjacent to the root is $>2$. 
In this case,
the Moore path $ \lambda(T)$ has length $l(T)-1$ and  is such that:
\begin{equation}\label{eq:lambda_corolla}
   \lambda(T,t) =  \delta_2 \circ_2^{1-t} s_{|T|}\back T,  \quad 0 \leqslant t \leqslant 1.
   \end{equation}
In particular, this defines $\lambda(\delta_n)$ for any corolla $\delta_n$ with $n\geqslant 3$, since $|\lambda(\delta_n)|=l(\delta_n)-1=1$.

Let us assume that $\lambda$ is defined for edge-labeled trees with $k<n$ leaves and whose number of incoming edges at the root is $>2$,
and that it satisfies (\ref{eq:lambda_corolla}). 
Let us now consider a tree $T$ such that $T = T_1 \circ_i^r  T_2$.

For $1<i $, we define: $\lambda(T) = \lambda(T_1) \circ_{i}^r T_2$. On the other hand, for $i = 1$ the path
$\lambda(T)$ is defined as the following juxtaposition of paths: 
\[ 
\big(\sigma(T_2)_{[0,l(T_2)+r-1]}\circ_{|T_2|+1}^r s_{|T_1|}(\back T_1)\big) \cdot 
\big(\lambda(T_1)_{[1-r,l(T_1)-1]}\circ_1^r T_2\big). 
\]
where we have used the operation $\gamma_{[a,b]}$ defined in subsection \ref{subsec:moore}.

It is not hard to check that these definitions are independent of the decomposition of
a tree as in (\ref{eq:assoc}) and (\ref{eq:comm}), that $\lambda$ satisfies  (\ref{eq:lambda_corolla}), 
that $\lambda$ is continuous with respect to $T$ and that it passes to the quotient space $K_n$.
The properties (\ref{alig:lambda})
follow directly from the construction of $\lambda$.

{\sl Second case:}  We define $\lambda(T)$ where the number of incoming edges at the root of $T$ is $2$. Let us first define $\lambda(\delta_2)=\delta_2$ as the constant path with length $0$. 

Now, let us consider $T=(\delta_2\circ_2^v T_2)\circ_1^u T_1$ and define $\lambda(T)$ as the following juxtaposition:
\[
\big(\sigma(T_1)_{[0,l(T_1)+u-1]}\circ_{|T_1|+1}^{Max(u,v)} T_2\big) \cdot 
\big((\delta_2\circ_2^{Max(u,v)-t} T_2)_{[0,Max(u,v)-v]}\circ_1^u T_1\big)
\]
Notice that in particular we have: 
$\lambda(\delta_2 \circ_1^u T_1) = \lambda((\delta_2\circ_2^1 \delta_1)\circ_1^u T_1)=\sigma(T_1)_{[0,l(T_1)+u-1]}$ and
$\lambda(\delta_2 \circ_2^v T_2)=\lambda((\delta_2\circ_2^v T_2)\circ_1^1 \delta_1) =( \delta_2 \circ_2^{1-t} T_1)_{[0,1-v]}$.
It is necessary to check that this construction is well defined on the quotient $K_n$, in particular one has to check that it is compatible with the first case when $u$ or $v$ tends to $0$. The properties (\ref{alig:lambda}) must also be checked. The verification of these facts are long but straightforward.
\end{proof}

For the $(n+1)$-corolla $\delta_{n+1} \in K_{n+1}$, it follows that $F_n(\delta_{n+1};x_{[n-1]},p)$ is defined by first ``sliding vertices along edges'' as follows: 
\renewcommand{\Aaa}{*[o]-<5pt>{\bullet}  \ar@{-}[ul] \ar@{-}[u]  \ar@{-}[ur] }
\renewcommand{\Abb}{*[o]-<5pt>{\bullet}  \ar@{-}[d] \ar@{-}[ul] \ar@{-}[ur]  }
\renewcommand{\Aab}{*[o]-<5pt>{\bullet}  \ar@{-}[d]  \ar@{-}[ur] \ar@{-}[ul] }
\begin{equation} \label{eq:sliding_vertices}
  \raisebox{25pt}{
  \xymatrix@R=12pt@C=8pt{
  {}\save[]  +<25pt,6pt>*{x_{[n-1]}} +<2pt,-8pt>*{\overbrace{\hskip 2ex}}  +<28pt,6pt>*{[p,0]} \restore
        &              &      &      &    \\
  \ast  &              & \Aaa &      &    \\
        & \Abb_{1 - s} &      &      &    \\
        &              &      &      &    \\ }}
  \mbox{for} \quad 0 \leqslant s \leqslant 1 \quad \mbox{ and }
  \raisebox{25pt}{
  \xymatrix@R=12pt@C=8pt{
  {}\save[]  +<22pt,6pt>*{x_{[n-1]}} +<2pt,-8pt>*{\overbrace{\hskip 2ex}}  +<25pt,-15pt>*{[p,0]} \restore
 \ast &       &              &       &      \\
      & \Aaa  &              &       &      \\
      &       & \Aab^{s - 1} &       &      \\
      &       &              &       &      \\ }}
  \quad \mbox{for} \quad 1 \leqslant s \leqslant 2,
\end{equation}
and ending with the path $[\delta_2;\ast;[p,t]]$, taking the base point to $[\delta_2;\ast;[p,1]]$.

\begin{thm}[Recognition Theorem of Relative Loop Spaces]\label{thm:mainresult} For any $A_\infty$-action $X \acinfty P$, there is a weak equivalence 
$P \to \mloop(CP \hquotient X, P \hquotient X)$ that is $A_\infty$-equivariant with respect to the natural map $X \to \mloop(CP \hquotient X)$. 
\end{thm}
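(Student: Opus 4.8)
The plan is to assemble the theorem from the pieces already established in this section, essentially running a five-lemma argument over the quasi-fibration
\[
B(X,X,CP) \to B(\ast,X,CP)
\]
whose fibre we must identify with a deformation retract of $P$. First I would record that $CP$ is contractible, and that the induced $A_\infty$-action $X \acinfty CP$ makes $B(X,X,CP)$ contractible by Theorem \ref{thm:def_retract}, since $CP$ is a deformation retract of $B(X,X,CP)$ and $CP$ is contractible. I would then use Corollary \ref{cor:alpha}, applied to the action $X \acinfty CP$, to produce the map $\alpha_{CP}: B(X,X,CP) \to \mpath(B(\ast,X,CP))$; restricting the total space to the sub-bar-construction built on $P \subset CP$ and using that the end points of $\alpha_{CP}$ land in the base point $[\delta_2;\ast,N(T;x,x_{[n]},[p,t])]$ and in $B(\ast,X,CP)$, one sees that $\alpha_{CP}$ restricts to a map $B(X,X,P) \to \mpath^\ast(B(\ast,X,CP))$ whose other endpoint records when the image lands in the subspace $B(\ast,X,P) = P\hquotient X$. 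That is exactly the data exhibiting a map $P \simeq B(X,X,P) \to \mloop(CP\hquotient X, P\hquotient X)$.

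Next I would check this map agrees, up to homotopy, with the natural embedding $p \mapsto \gamma_p = [\delta_2;\ast,[p,t]]$ of Proposition \ref{prop:equivariance}: both are obtained by the same ``sliding vertices along edges'' recipe applied to $\delta_2$, so the formula for $\alpha_{CP}([\delta_2,x,p])$ in Corollary \ref{cor:alpha} and the description in \eqref{eq:sliding_vertices} match after the obvious reparametrisation, and on the deformation retract $[\delta_2;e,p]\subset B(X,X,P)$ the composite reduces to $\gamma_p$. With that identification in hand, the $A_\infty$-equivariance statement is precisely the content of Proposition \ref{prop:equivariance}, so nothing new is needed there; the map of the theorem is the natural embedding and it is already known to be $A_\infty$-equivariant with respect to the $A_\infty$-map \eqref{eq:natural_map}, which is the usual map $X \to \mloop BX$ composed with the inclusion into $\mloop(CP\hquotient X)$.

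It remains to prove the map is a weak equivalence, and this is where the five-lemma argument enters. I would consider the commutative ladder comparing the quasi-fibration sequence
\[
B(X,X,P) \to B(X,X,CP) \to B(\ast,X,CP)
\]
— granting that $X$ here may be assumed grouplike, or else replacing this step by a direct contractibility/fibration argument — with the path-loop fibration
\[
\mloop(CP\hquotient X, P\hquotient X) \to \mpath^\ast(CP\hquotient X) \to CP\hquotient X,
\]
where the middle vertical map is $\alpha_{CP}$ on $B(X,X,CP)$ and the base map is the identity (or the canonical homotopy equivalence) on $CP\hquotient X = B(\ast,X,CP)$. Since $CP$ is contractible, $B(\ast,X,CP)$ is homotopy equivalent to $BX$, and $B(X,X,CP)$ is contractible as is $\mpath^\ast(CP\hquotient X)$, so the middle and right vertical arrows are weak equivalences; the long exact sequences of homotopy groups of the two fibrations together with the five lemma then force the left-hand map $P \to \mloop(CP\hquotient X, P\hquotient X)$ to be a weak equivalence.

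The main obstacle I anticipate is the first part of step three: verifying that $B(X,X,CP) \to B(\ast,X,CP)$ really is a quasi-fibration in the generality needed (the quoted Theorem requires $X$ grouplike, whereas the statement of Theorem \ref{thm:mainresult} asks for \emph{any} $A_\infty$-action), and making sure the restriction of $\alpha_{CP}$ to $B(X,X,P)$ genuinely has image in $\mloop(CP\hquotient X, P\hquotient X)$ rather than merely in $\mpath^\ast$ — i.e., that the terminal endpoint $[T;\ast,x_{[n]},[p,t]]$ lies in the sub-space $B(\ast,X,P)$, which is true because $t=1$ and $[p,1]\in CP$ is the cone point... no: one must be careful, $[p,1]$ is the apex of the cone, not a point of $P$, so the correct statement is that the endpoint lies in $B(\ast,X,CP)$ and the relevant loop condition is that it equals the image of the base point of $P\hquotient X$ in $CP\hquotient X$; unwinding which point of $CP$ plays the role of the sub-space inclusion is the delicate bookkeeping, and reconciling it with the non-grouplike case (perhaps by passing to a functorial grouplike replacement, or by a direct deformation-retract argument using Theorem \ref{thm:def_retract} on $CP$) is the step I would scrutinise most carefully.
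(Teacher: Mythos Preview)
Your overall strategy—build a comparison map out of $\alpha$ and run a five-lemma over a quasi-fibration—is exactly the paper's, but two concrete pieces are off and they are precisely the ones you flag as worrisome at the end.

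First, $\alpha_{CP}$ does \emph{not} land in based paths. Corollary~\ref{cor:alpha} gives $\alpha_{CP}([T;x,x_{[n]},q])(0)=[\delta_2;\ast,N_{n+2}(T;x,x_{[n]},q)]$, and this point depends on the input; it is not the base point of $CP\hquotient X$ (the apex of the cone). Restricting to $B(X,X,P)\subset B(X,X,CP)$ does not help: for $p\in P$ one gets a path starting at $[\delta_2;\ast,N_{n+2}(T;x,x_{[n]},p)]\in P\hquotient X$, still not based. The paper's fix is to work with $\alpha_P$ (not $\alpha_{CP}$) and \emph{prepend} the cone segment $t\mapsto[\delta_2;\ast,[N_{n+2}(T;x,x_{[n]},p),t]]$, which runs from the apex to that starting point; the resulting concatenation $\beta$ is then genuinely a relative loop. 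This extra segment is not a cosmetic reparametrisation; without it you have no map into $\mloop(CP\hquotient X,P\hquotient X)$ at all.

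Second, your ladder is the wrong one. The sequence $B(X,X,P)\to B(X,X,CP)\to B(\ast,X,CP)$ is not a fibration sequence: the homotopy fibre of the right-hand map is $X$, not $B(X,X,P)$. The paper instead compares the quasi-fibration $X\to B(X,X,P)\to P\hquotient X$ with the evaluation fibration ${\rm ev}^{-1}([p_0,1])\to \mloop(CP\hquotient X,P\hquotient X)\to P\hquotient X$, both over the \emph{same} base $P\hquotient X$. The map on fibres $X\to{\rm ev}^{-1}([p_0,1])$, after contracting $CP$ to its tip, becomes homotopic to the usual map $X\to\mloop(BX)$, which is a weak equivalence; the five lemma then gives that $\beta$ is a weak equivalence. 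One still has to check that $\beta$ precomposed with $P\hookrightarrow B(X,X,P)$ is homotopic to the natural embedding $p\mapsto\gamma_p$, and the paper does this by exhibiting the extra loop $\rho_p$ as null-homotopic via $\alpha_P$ again. Your identification of the $A_\infty$-equivariance with Proposition~\ref{prop:equivariance} is correct; and your worry about grouplikeness is legitimate—the paper's proof does invoke the quasi-fibration theorem, which carries that hypothesis.
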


\begin{proof}The $A_\infty$-equivariance is the statement of Proposition \ref{prop:equivariance}.  Let us show that the map $P \to \mloop (CP \hquotient X, P \hquotient X)$ is a weak equivalence.  Let us consider
$\beta: B(X,X,P)\to \mloop (CP \hquotient X, P \hquotient X)$  obtained as the composition of paths
$$\beta([T;x,x_{[n]},p])=\left[\delta_2;\ast,[N_{n+2}(x,x_{[n]},p),t]\right]\cdot \iota\alpha_P([T;x,x_{[n]},p])$$
where $\iota$ is the embedding $\mpath(P\hquotient X)\into \mpath(CP\hquotient X)$ and $\alpha_P$ has been defined in Corollary \ref{cor:alpha}.
Now $\beta$ induces a map of quasi-fibrations:



\begin{equation*}
\raisebox{45pt}{
\xymatrix{
X            \ar[r] \ar@{^{(}->}[d]  &  {\rm ev}^{-1}([p_0,1])                  \ar@{^{(}->}[d]   \\ 
B(X,X,P)     \ar[r]^<<<<{\beta} \ar[d]           &  \mloop(CP \hquotient X, P \hquotient X) \ar[d]^{{\rm ev}} \\ 
P\hquotient X  \ar@{=}[r]         &  P \hquotient X,                                
}}
\end{equation*}
where ${\rm ev}:  \mloop(CP \hquotient X, P \hquotient X) \to P \hquotient X$ is the evaluation map that takes each relative loop to its final point in $P \hquotient X$.

Since $CP$ is contractible, projecting onto its tip gives a weak equivalence $ {\rm ev}^{-1}([p_0,1]) \to \mloop(BX)$. The top map followed by
this weak equivalence is homotopic to the usual map $X\into \mloop(BX)$ which is a weak equivalence. Hence the top map is a weak equivalence and 
 the usual five lemma applies to show that the middle map is a weak equivalence. As a consequence the weak equivalence $\beta$ composed with
 the usual inclusion $P \into B(X,X,P)$ which is known to be a homotopy equivalence (see Theorem \ref{thm:def_retract}) is a weak equivalence. The map so obtained is given by 
 $$p\mapsto (\underbrace{\left[\delta_2;\ast,[p,t]\right]}_{\gamma_p(t)})\cdot \underbrace{( \left[\delta_2\circ_2^{1-2t}\delta_2;\ast,e_X,[p,1]\right])\cdot (\left[\delta_2\circ_1^{2t}\delta_2;\ast,e_X,[p,1]\right])}_{\rho_p}.$$
 It is then enough to prove that $\rho_p$ is homotopic to a constant path in order to prove that the map $p\mapsto \gamma_p$ is a weak equivalence.
 The path $\rho_p$ lifts to a map $\tilde\rho_p:I\to B(X,X,P)$ replacing $\ast$ by $e_X$. The map $H_p:I\times I\to B(\ast,X,P)$ defined by $H_p(t,u)=\alpha_P(\tilde\rho_p(t),u)$ is a homotopy from the constant path at $[\delta_2,\ast,p]$ to $\rho_p$ thanks to Corollary \ref{cor:alpha}.
 \end{proof}





\section{The 2-colored operad of $A_\infty$-actions}\label{shaction}

In this section, we study the approach to $A_\infty$-actions by means of Boardman-Vogt $W$-construction $W(\ac)$ on the operad of monoid actions $\ac$. We also compare our definition with the one given by Iwase and Mimura in \cite{iwase-mimura}. 

\subsection{The operad \ac}
The 2-colored non-symmetric operad of monoid actions on spaces will be denoted by $\ac$, i.e., it is the operad whose algebras consist of a topological monoid $X$ and a left $X$-space $P$. The colors in this case will be called closed and open and denoted $\{ \cl, \op \}$, according to the following convention: 
\begin{itemize}
  \item To any topological monoid $X$, there will be assigned the color {\it closed} $\cl$.
  \item The color {\it open} $\op$ will be assigned to any $X$-space $P$.
\end{itemize}
Such choice of color comes from Theorem \ref{thm:mainresult} where the elements of the monoid correspond to loops while the elements of the $X$-space correspond to relative loops to which we assign the color {\it open}, because relative loops are not necessarily closed. 
\begin{defi}[Operad of left actions]\label{def:act}
Let us define $\ac$ as the $\{ \cl, \op \}$-colored operad such that $\ac(p,0;\cl)$ 
and $\ac(p,1;\op)$ are singletons for any $p \geqslant 0$, and $\ac(p,q;{\rm x})$ is the empty space otherwise.
\end{defi}
An algebra over $\ac$ consists of a pair $(X,P)$ such that $X$ is a topological monoid and $P$ is a left $X$-space. The unit $e$ is the base point of $X$.
The action of $X$ on $P$ is given by the only point of $\ac(1,1;\op)$. The compatibility between action and the product follows from the fact that $\ac(n,1;\op)$ is a singleton.
The product on $X$ is given by the only point of $\ac(2,0;\cl)$, its associativity follows from the fact that each $\ac(n,0;\cl)$ is a single point space and the identity element exists because of the degeneracy induced by $\ac(0,0;\cl)$. For the same reason, the action determined by $\ac$ is unital in the sense that $e_{{}_X} \cdot p = p, \quad \forall p \in P$. 

The operad of non-unital actions will be denoted by $\overline{\ac}$. It coincides with $\ac$ except when $p=q=0$, in which case it is defined as the empty space: $\overline{\ac}(0,0;\cl) = \emptyset$.


\subsection{The operad $\ac_\infty$} \label{sec:w(act)}
Let us now introduce the operad $\ac_\infty$ whose algebras are precisely $A_\infty$-actions $X \acinfty P$. We will give a definition of $\ac_\infty$ in terms of the Boardman and Vogt's $W$-construction. In the next section we will explore in details the link between $A_\infty$-actions and the $1$-dimensional Swiss-cheese operad $\SC_1$.

Since in this work the $A_\infty$-actions are strictly unital, in the following definition we use the $W$-construction on the operad of non-unital actions $\overline{\ac}$ and augment it by including the strict unit, i.e., by defining $\ac_\infty(0,0;\cl)$ as the one point space $K_0$. 
\begin{defi}
The operad $\ac_\infty$ is defined as follows: 
$\ac_\infty(p,q;{\rm x}) = W\!\!\left(\overline{\ac}\right)(p,q;{\rm x})$, for $p+q>0$, and $\ac_\infty(0,0;\cl) = K_0$.
\end{defi}



Since $\ac(p,0;{\rm x})$ and $\ac(p,1;\op)$ are spaces with a single point, the trees in $\ac_\infty$ can be viewed as planar metric trees with colored edges. The edges of color $\cl$ (closed) will be represented by straight line segments. When the color is open, this will be indicated by the symbol $\op$ over the edge. For instance, we have the trees
\[
\raisebox{15pt}{
 $\xymatrix@R=16pt{ {}\save[]   +<-15pt,-1pt>*{\text{\tiny $1$}}
                                +<8pt,0pt>*{\text{\tiny $2$}}
                                +<8pt,0pt>*{...}
                                +<8pt,0pt>*{\text{\tiny $p$}}
                                 \restore \\ 
                    *[o]-<5pt>{\bullet}\ar@{-}[u]!<-20pt,0pt>
                                       \ar@{-}[u]!<-10pt,0pt>
                                       \ar@{-}[u]!< 10pt,0pt>
                                       \ar@{-}[u]!< 20pt,0pt>|-{\op}
                                       \ar@{-}[d]!<0pt,5pt>|-{\op}        \\  
                                       \                          }$
                } \quad \makebox{ and } \quad
\raisebox{15pt}{
 $\xymatrix@R=16pt{ {}\save[]   +<-15pt,-1pt>*{\text{\tiny $1$}}
                                +<8pt,0pt>*{\text{\tiny $2$}}
                                +<11pt,0pt>*{...}
                                +<11pt,0pt>*{\text{\tiny $p$}}
                                 \restore \\ 
                    *[o]-<5pt>{\bullet}\ar@{-}[u]!<-20pt,0pt>
                                       \ar@{-}[u]!<-10pt,0pt>
                                       \ar@{-}[u]!< 20pt,0pt>
                                       \ar@{-}[d]!<0pt,5pt>                  \\  
                                       \                          }$
                }
\]                       
belonging respectively to $\ac_\infty(p,1;\op)$ and $\ac_\infty(p,0;\cl)$. 

Since the open leaf of a tree in $\ac_\infty(p,1;\op)$ is the last leaf of a planar tree, and since  
$\ac_\infty(0,0;\cl) = K_0$ will induce the degeneracy for closed leaves, it follows from Proposition \ref{prop:A_infty_action_maps} that $\ac_\infty$-algebras are precisely $A_\infty$-actions. 

Iwase and Mimura \cite{iwase-mimura} have defined $A_\infty$-actions along a given map as follows.
  
\begin{defi}[$A_\infty$-action after Iwase-Mimura]\label{def:iwase-mimura}
 Given an $A_\infty$-space $(X, \{M_s\})$, a space $P$ and a map $h : X \to P$, an (left) $A_\infty$-action of $X$ on $P$ along $h$ is a sequence of maps $\{ N_i : K_i \times X^{i-1} \times P \to P \}_{i \geqslant 1}$, such that $N_1 = Id_P$ and:
 \begin{enumerate}[i)]
 \item $N_k(\rho \circ_	j \tau;x_1, \dots, x_{k-1},p) = N_r(\rho;x_1, \dots, x_{j-1},    M_s(\tau;x_j, \dots, x_{j+s-1}), \dots, x_{k-1},p)$, for $1 \leqslant j \leqslant r - 1.$ \\[-10pt] 
 \item $N_k(\rho \circ_r \sigma;x_1, \dots, x_{k-1},p) = N_r(\rho;x_1, \dots, x_{k - s};N_s(\sigma;x_{k-s+1}, \dots, x_{k-1},p))$,\\[-10pt]
 \item $N_k(\mu;x_1, \dots, x_{k-1},p_0) = h(M_{k-1}(s_k(\mu);x_1, \dots, x_{k-1}))$.
 \end{enumerate}
  where $k = r + s - 1, \ \rho \in K_r$, $\tau, \sigma \in K_s$, $\mu \in K_k$ and $p_0$ is the base point of $P$.
 \end{defi}

\begin{obs}
Notice that when $s=0$, condition $i)$ says that 
 \[ N_k(\mu;x_1,..., x_{i-1},e, x_{i+1}..., x_{k-1},p) = N_{k-1}(s_i(\mu);x_1,..., \widehat{x_i},..., x_{k-1},p), \] for any $1 \leqslant i \leqslant k-1$. If we impose the condition $s \geqslant 1$, then we get a ``non-unital $A_\infty$-action'' (i.e., an $A_\infty$-action under which the unit $e$ does not necessarily act trivially).
\end{obs}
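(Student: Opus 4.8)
The plan is to obtain both assertions by reading Definition~\ref{def:iwase-mimura} in the single degenerate case $s=0$ and matching the resulting identity against the description of the degeneracies $s_i$ of the associahedra recalled in Section~\ref{sub:ainftymaps}.

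For the first assertion I would start from condition $i)$ and set $s=0$. Since $K_0=\{\delta_0\}$, the only admissible $\tau$ is $\tau=\delta_0$, and then $\rho\circ_j\tau=\rho\circ_j\delta_0=s_j(\rho)$ by the very definition of the degeneracy maps, while $M_0(\delta_0)=e$ is the unit of the $A_\infty$-structure on $X$ and $k=r+s-1=r-1$. Substituting, the left-hand side $N_k(\rho\circ_j\tau;x_1,\dots,x_{k-1},p)$ becomes $N_{r-1}(s_j(\rho);x_1,\dots,x_{r-2},p)$ and the right-hand side becomes $N_r(\rho;x_1,\dots,x_{j-1},e,x_j,\dots,x_{r-2},p)$; counting the $X$-arguments confirms the arities match ($r-2$ on the left for $N_{r-1}$, $r-1$ on the right for $N_r$). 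Renaming $r\mapsto k$, $\rho\mapsto\mu$, $j\mapsto i$ and relabelling the dummy variables $x_j,\dots,x_{r-2}$ lying after the inserted unit as $x_{i+1},\dots,x_{k-1}$ --- so that $e$ occupies the slot of $x_i$ --- gives exactly the displayed identity, the hat on the right recording that $x_i$ has been dropped.

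The only point requiring any attention is the index bookkeeping together with the fact, already noted before the statement, that $\rho\circ_j\delta_0$ and hence $s_j$ are well defined on the quotient $K_n$ (this is where the maximum-of-lengths convention enters); there is no real obstacle. For the second assertion I would simply observe that imposing $s\geqslant 1$ in Definition~\ref{def:iwase-mimura} deletes precisely the case $\tau=\delta_0$ of condition $i)$, i.e.\ the identity just derived, and that among conditions $i)$ and $ii)$ this is the sole relation constraining how $e$ acts; under the remaining conditions $e$ need therefore not act as the identity on $P$, which is exactly the meaning of a non-unital $A_\infty$-action (equivalently, the structure of an algebra over $W(\overline{\ac})$ without the augmentation of the $(0,0;\cl)$-component by the strict unit $K_0$).
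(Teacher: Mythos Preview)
Your proposal is correct and matches the paper's intent: the observation is stated in the paper without proof, as it is an immediate unwinding of condition $i)$ at $s=0$ together with $\rho\circ_j\delta_0=s_j(\rho)$ and $M_0(\delta_0)=e$. Your write-up makes this explicit and handles the index relabelling carefully; there is nothing to add.
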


From Proposition \ref{prop:A_infty_action_maps}, it follows that any family of maps as in the above definition gives rise to an $A_\infty$-action. The converse is true up to $A_\infty$-equivariant weak equivalence because of Theorem \ref{thm:mainresult}. Indeed, let $(B,A)$ be a pair of topological spaces and consider the action $\Omega_{{}_M}(A) \acts \Omega_{{}_M}(B,A)$. By choosing a path $p_0 \in \Omega_{{}_M}(B,A)$ as the base point, the map $h : \Omega_{{}_M}(B) \to \Omega_{{}_M}(B,A)$ can be defined as $h(\gamma) = \gamma \cdot p_0$. Since it is an action in the strict sense, it follows immediately that it is an action along $h$ as defined above. 

Let us close this section by observing that an $W\!\!\left(\overline{\ac}\right)$-algebra structure on a pair $(X,P)$ is equivalent to the existence of a family of maps as in definition \ref{def:iwase-mimura} satisfying only relations $i)$ and $ii)$ with the condition $s \geqslant 1$. 
Such structures will be called {\it non-unital $A_\infty$-actions}. 


\section{The one-dimensional Swiss-cheese operad} \label{sec:swiss_cheese}

In this section we will explore the relations between the operad of $A_\infty$-actions and the one dimensional Swiss-cheese operad. 
Such comparison will imply in particular  the existence of an $A_\infty$-action $\Omega(B) \acinfty \Omega(B,A)$ in the space of usual loops and usual paths endowed with the Poincar\'e product of loops and juxtaposition of paths.

\subsection{The Operad $\mathcal{SC}_1$} 

The one dimensional Swiss-cheese operad is a 2-colored operad. The elements of color {\emph open} consist of those configurations of little intervals inside the radius $1$ interval centered at the origin: $[-1,1]$ that are invariant under the map $x \mapsto -x$. On the other hand, the elements color {\emph closed} are the usual little intervals in $[0,1]$.  

\subsubsection{Composition law}
The space of usual little intervals configurations will be denoted by $\mathcal{SC}_1(n,0;\cl)$ while the space of open little intervals configurations will be denoted by $\mathcal{SC}_1(n,i;\op)$, where $i=0$ when there is no little interval in $[-1,1]$ that is centered at the origin and $i=1$ for those open symmetric configurations of little intervals in $[-1,1]$ having a little interval centered at the origin. 
 
The composition law $\mathcal{SC}_1(n,0;\cl)$ is the usual composition law of the little intervals operad. The composition law for $\mathcal{SC}_1(n,i;\op)$ goes as follows. First the composition   
\begin{equation}
\circ^{\op} : \mathcal{SC}_1(p,1;\op) \times \mathcal{SC}_1(q,i;\op) \to \mathcal{SC}_1(p + q,i;\op)
\end{equation}
is given by the usual gluing operation in the little interval centered at the origin, and
\begin{equation}
\circ_i^{\cl} : \mathcal{SC}_1(k,i;\op) \times \mathcal{SC}_1(n,0;\cl) \to \mathcal{SC}_1(k + n - 1,i;\op)
\end{equation}
is given by first symmetrizing the configuration in $[0,1]$ into a symmetric configuration in $[-1,1]$ and then gluing the resulting positive and negative little intervals of the new symmetric configuration accordingly into the corresponding little intervals of the $i$th pair of conjugated little intervals in $\mathcal{SC}_1(k,1)$, as illustrated by Figure \ref{1dim_insertion} where the gluing is from the bottom to the top.

\begin{figure}[htb]
\setlength{\unitlength}{3700sp}%
\begingroup\makeatletter\ifx\SetFigFont\undefined%
\gdef\SetFigFont#1#2#3#4#5{%
  \reset@font\fontsize{#1}{#2pt}%
  \fontfamily{#3}\fontseries{#4}\fontshape{#5}%
  \selectfont}%
\fi\endgroup%
\begin{picture}(3906,1680)(2326,-2773)
\thinlines
{\color[rgb]{0,0,0}\put(3451,-1411){\line(-1, 0){150}}
\put(3301,-1411){\line( 0,-1){300}}
\put(3301,-1711){\line( 1, 0){150}}
}%
{\color[rgb]{0,0,0}\put(3901,-1411){\line( 1, 0){150}}
\put(4051,-1411){\line( 0,-1){300}}
\put(4051,-1711){\line(-1, 0){150}}
}%
{\color[rgb]{0,0,0}\put(2701,-1411){\line(-1, 0){150}}
\put(2551,-1411){\line( 0,-1){300}}
\put(2551,-1711){\line( 1, 0){150}}
}%
{\color[rgb]{0,0,0}\put(2851,-1411){\line( 1, 0){150}}
\put(3001,-1411){\line( 0,-1){300}}
\put(3001,-1711){\line(-1, 0){150}}
}%
{\color[rgb]{0,0,0}\put(4951,-1411){\line( 1, 0){150}}
\put(5101,-1411){\line( 0,-1){300}}
\put(5101,-1711){\line(-1, 0){150}}
}%
{\color[rgb]{0,0,0}\put(4501,-1411){\line(-1, 0){150}}
\put(4351,-1411){\line( 0,-1){300}}
\put(4351,-1711){\line( 1, 0){150}}
}%
{\color[rgb]{0,0,0}\put(5701,-1411){\line( 1, 0){150}}
\put(5851,-1411){\line( 0,-1){300}}
\put(5851,-1711){\line(-1, 0){150}}
}%
{\color[rgb]{0,0,0}\put(5551,-1411){\line(-1, 0){150}}
\put(5401,-1411){\line( 0,-1){300}}
\put(5401,-1711){\line( 1, 0){150}}
}%
{\color[rgb]{0,0,0}\put(3076,-2386){\line(-1, 0){ 75}}
\put(3001,-2386){\line( 0,-1){150}}
\put(3001,-2536){\line( 1, 0){ 75}}
}%
{\color[rgb]{0,0,0}\put(3301,-2386){\line( 1, 0){ 75}}
\put(3376,-2386){\line( 0,-1){150}}
\put(3376,-2536){\line(-1, 0){ 75}}
}%
{\color[rgb]{0,0,0}\put(2701,-2386){\line(-1, 0){ 75}}
\put(2626,-2386){\line( 0,-1){150}}
\put(2626,-2536){\line( 1, 0){ 75}}
}%
{\color[rgb]{0,0,0}\put(2776,-2386){\line( 1, 0){ 75}}
\put(2851,-2386){\line( 0,-1){150}}
\put(2851,-2536){\line(-1, 0){ 75}}
}%
{\color[rgb]{0,0,0}\put(5326,-2386){\line( 1, 0){ 75}}
\put(5401,-2386){\line( 0,-1){150}}
\put(5401,-2536){\line(-1, 0){ 75}}
}%
{\color[rgb]{0,0,0}\put(5101,-2386){\line(-1, 0){ 75}}
\put(5026,-2386){\line( 0,-1){150}}
\put(5026,-2536){\line( 1, 0){ 75}}
}%
{\color[rgb]{0,0,0}\put(5701,-2386){\line( 1, 0){ 75}}
\put(5776,-2386){\line( 0,-1){150}}
\put(5776,-2536){\line(-1, 0){ 75}}
}%
{\color[rgb]{0,0,0}\put(5626,-2386){\line(-1, 0){ 75}}
\put(5551,-2386){\line( 0,-1){150}}
\put(5551,-2536){\line( 1, 0){ 75}}
}%
{\color[rgb]{0,0,0}\put(3751,-2311){\line(-1, 0){150}}
\put(3601,-2311){\line( 0,-1){300}}
\put(3601,-2611){\line( 1, 0){150}}
}%
{\color[rgb]{0,0,0}\put(4651,-2311){\line( 1, 0){150}}
\put(4801,-2311){\line( 0,-1){300}}
\put(4801,-2611){\line(-1, 0){150}}
}%
{\color[rgb]{0,0,0}\multiput(3301,-2311)(100.00000,0.00000){2}{\line( 1, 0){ 50.000}}
\multiput(3451,-2311)(0.00000,-120.00000){3}{\line( 0,-1){ 60.000}}
\multiput(3451,-2611)(-100.00000,0.00000){2}{\line(-1, 0){ 50.000}}
}%
{\color[rgb]{0,0,0}\multiput(2701,-2311)(-100.00000,0.00000){2}{\line(-1, 0){ 50.000}}
\multiput(2551,-2311)(0.00000,-120.00000){3}{\line( 0,-1){ 60.000}}
\multiput(2551,-2611)(100.00000,0.00000){2}{\line( 1, 0){ 50.000}}
}%
{\color[rgb]{0,0,0}\multiput(5101,-2311)(-100.00000,0.00000){2}{\line(-1, 0){ 50.000}}
\multiput(4951,-2311)(0.00000,-120.00000){3}{\line( 0,-1){ 60.000}}
\multiput(4951,-2611)(100.00000,0.00000){2}{\line( 1, 0){ 50.000}}
}%
{\color[rgb]{0,0,0}\multiput(5701,-2311)(100.00000,0.00000){2}{\line( 1, 0){ 50.000}}
\multiput(5851,-2311)(0.00000,-120.00000){3}{\line( 0,-1){ 60.000}}
\multiput(5851,-2611)(-100.00000,0.00000){2}{\line(-1, 0){ 50.000}}
}%
{\color[rgb]{0,0,0}\put(2401,-2461){\line( 1, 0){3600}}
}%
{\color[rgb]{0,0,0}\put(2551,-2161){\line(-1, 0){150}}
\put(2401,-2161){\line( 0,-1){600}}
\put(2401,-2761){\line( 1, 0){150}}
}%
{\color[rgb]{0,0,0}\put(5851,-2161){\line( 1, 0){150}}
\put(6001,-2161){\line( 0,-1){600}}
\put(6001,-2761){\line(-1, 0){150}}
}%
{\color[rgb]{0,0,0}\put(4201,-2311){\line( 0,-1){300}}
}%
{\color[rgb]{0,0,0}\put(2401,-1561){\line( 1, 0){3600}}
}%
{\color[rgb]{0,0,0}\put(2551,-1261){\line(-1, 0){150}}
\put(2401,-1261){\line( 0,-1){600}}
\put(2401,-1861){\line( 1, 0){150}}
}%
{\color[rgb]{0,0,0}\put(5851,-1261){\line( 1, 0){150}}
\put(6001,-1261){\line( 0,-1){600}}
\put(6001,-1861){\line(-1, 0){150}}
}%
{\color[rgb]{0,0,0}\put(4201,-1411){\line( 0,-1){300}}
}%
{\color[rgb]{0,0,0}\multiput(4201,-1561)(-15.30612,-18.36735){50}{\makebox(1.6667,11.6667){\SetFigFont{5}{6}{\rmdefault}{\mddefault}{\updefault}.}}
}%
{\color[rgb]{0,0,0}\multiput(4201,-1561)(14.87805,-18.59756){49}{\makebox(1.6667,11.6667){\SetFigFont{5}{6}{\rmdefault}{\mddefault}{\updefault}.}}
}%
{\color[rgb]{0,0,0}\multiput(6001,-1561)(-3.92603,-23.55619){39}{\makebox(1.6667,11.6667){\SetFigFont{5}{6}{\rmdefault}{\mddefault}{\updefault}.}}
}%
{\color[rgb]{0,0,0}\multiput(2401,-1561)(3.91536,-23.49218){39}{\makebox(1.6667,11.6667){\SetFigFont{5}{6}{\rmdefault}{\mddefault}{\updefault}.}}
}%
\put(5956,-1201){\makebox(0,0)[lb]{\smash{{\SetFigFont{12}{14.4}{\rmdefault}{\mddefault}{\updefault}$1$}}}}
\put(4126,-1216){\makebox(0,0)[lb]{\smash{{\SetFigFont{12}{14.4}{\rmdefault}{\mddefault}{\updefault}$0$}}}}
\put(2326,-1201){\makebox(0,0)[lb]{\smash{{\SetFigFont{12}{14.4}{\rmdefault}{\mddefault}{\updefault}$-1$}}}}
\end{picture}%
\caption{Illustration of $\circ_1^{\cl} : \SC_1(1,1;\op) \times \SC_1(2,0;\cl) \to \SC_1(2,1;\op)$.}  
\label{1dim_insertion}
\end{figure}
Note also that one can interpret $\SC_1(n,1,\op)$ as  
the configuration of $n+1$ little intervals such that the last little interval  contains $1$. We will use this interpretation in the sequel.
\subsubsection{Example}
 
That the classical $[0,1]$-parameterized based loop space admits an $A_\infty$-structure
is a straightforward generalization of the standard proof of homotopy associativity, which involves only a homotopy defined on $[0,1]$. The specific pentagonal higher homotopy illustrated in \cite{jds:lnm} involves precisely  \emph{dyadic} homeomorphisms \cite{dehornoy}
of $[0,1]$ to itself as do the higher homotopies.

The $[0,1]$-parameterized space of relative loops on a pair $A \subseteq B$ is defined as: 
\begin{equation*} 
  \Omega(B,A) = \{ \gamma : [0,1] \to B : \gamma(0) = \ast \ \makebox{ and } \gamma(1) \in A \}. 
\end{equation*}

To see that the pair $(\Omega B,\Omega(B,A))$ is an $\mathcal{SC}_1$-algebra, we observe that the closed part of $\mathcal{SC}$ is isomorphic to the little intervals operad and it acts on $\Omega B$ in the usual way. For the open part of $\mathcal{SC}$, 
%
%
the action on $(\Omega B,\Omega(B,A))$ is given by:
\begin{equation*}
  \rho : \mathcal{SC}_1(n,1,\op) \times \Omega(B)^{\times n} \times \Omega(B,A) \to \Omega(B,A)
\end{equation*}
where the path $\rho(d;\ell_1, \dots, \ell_n,\gamma)$ is obtained by running each loop $\ell_i$ through the cor\-responding little interval,
 as one usually does in loop spaces and then the path $\gamma$ through the subinterval of $d$ containing $1$. The remaining points of $[0,1]$ are taken to the base point of $B$. From the definition of $\mathcal{SC}_1(n,1,\op)$, the configuration $d$ will always have a little interval containing $1$, hence if $\gamma$ is a path that is not a loop, then $\rho(d;\ell_1, \dots, \ell_n,\gamma)$ is also a path that is not a loop, i.e., the map $\rho$ respects the color.

\subsection{Homotopy equivalence}

There exists an operad morphism $\theta : W\!\!\left(\overline{\ac}\right) \to \mathcal{SC}_1$ so that $\theta^{\cl}(n,0) : W\!\!\left(\overline{\ac}\right)(n,0;\cl) \to \mathcal{SC}_1(n,0;\cl)$ and $\theta^{\op}(n,1) : W\!\!\left(\overline{\ac}\right)(n,1;\op) \to \mathcal{SC}_1(n,1;\op)$ are homotopy equivalences for any $n \geqslant 1$.


It is possible to exhibit $\theta$ in an elementary way as follows.  
Each corolla in $W\!\!\left(\overline{\ac}\right)(n,0;\cl)$ is taken to a configuration of little intervals in $\mathcal{SC}_1(n,0;\cl)$ where each subinterval has diameter $\frac{1}{n}$, while each corolla in $\mathcal{SC}_1(n,1;\op)$ is taken to a  of little intervals in $\mathcal{SC}_1(n,1;\op)$ 
where each subinterval has diameter $\frac{1}{n+1}$, thus the configuration contains 1. The compatibility with the operad structure is obtained 
via a convex combination. To be more precise we define $\theta$ by double induction, on the number of leaves of a tre $T$ and then on the number of internal leaves. If the latter number is zero, $T$ is a corolla and $\theta$ has been defined on corollas. Then, one defines for any tree 
$T=X\circ_i^r Y$ in $W\!\!\left(\overline{\ac}\right)$ the configuration $\theta(T)$ as $(1-r)\theta(X)\circ_i \theta(Y)+r\theta(X\circ_i^0 Y).$


From the existence of the above operad homotopy equivalence, it follows the existence of a non-trivial 
$W\!\!\left(\overline{\ac}\right)$-algebra structure on the pair: $(\Omega(B,A),\Omega B)$. In other words, there is a {\it non-unital $A_\infty$-action} of $\Omega B$ on $\Omega(B,A)$.

We observe that the strict unit plays a crucial role in Theorem \ref{thm:mainresult}. Hence one cannot use the above operad homotopy equivalence $\theta$ to prove the {\it Swiss-cheese recognition principle}: ``every $SC_1$-algebra is homotopy equivalent to a pair of the form $(\Omega(B,A),\Omega B)$''. 

The Swiss-cheese recognition principle is indeed true in general for the Swiss-cheese operad $\SC_n$ with $n \geqslant 1$, as will be shown in a sequel to this paper. On the other hand, without Theorem \ref{thm:mainresult} one cannot show that any $A_\infty$-action $X \acinfty P$ (defined just as an unital $A_\infty$-map $X \to {\rm End}(P))$ is homotopy equivalent to a $SC_1$-algebra. Hence, the results of the present paper are not particular cases of the Swiss-cheese recognition principle.





\begin{thebibliography}{MSS02}

\bibitem[BM07]{BerMoe07}
Clemens Berger and Ieke Moerdijk.
\newblock Resolution of coloured operads and rectification of homotopy
  algebras.
\newblock {\em Contemp. Math.}, 431:31--58, 2007.

\bibitem[BV73]{BoaVog73}
J.~Michael Boardman and Rainer~M. Vogt.
\newblock {\em Homotopy invariant algebraic structures on topological spaces}.
\newblock Springer-Verlag, Berlin, 1973.
\newblock Lecture Notes in Mathematics, Vol. 347.

\bibitem[Deh11]{dehornoy}
Patrick Dehornoy.
\newblock {Tamari lattices and the symmetric Thompson monoid,}
\newblock {\em Progress in Math.}, vol. 299, Birkhauser (2012) pp. 211-250


\bibitem[For08]{forcey}
Stefan Forcey.
\newblock Convex hull realizations of the multiplihedra.
\newblock {\em Topology Appl.}, 156(2):326--347, 2008.

\bibitem[Hil53]{hilton:book}
P.~J. Hilton.
\newblock {\em An introduction to homotopy theory}.
\newblock Cambridge University Press, Cambridge, UK, 1953.

\bibitem[Hoe12]{hoefel12}
Eduardo Hoefel.
\newblock Some elementary operadic homotopy equivalences.
\newblock {Guillermo Corti\~nas (ed.) Topics in Noncommutative Geometry.
  Proceedings of the Conference, Buenos Aires, Argentina, Clay Mathematics
  Proceedings, vol.16, 67-74}, 2012.

\bibitem[IM89]{iwase-mimura}
Norio Iwase and Mamoru Mimura.
\newblock {Higher homotopy associativity.}
\newblock {Algebraic topology, Proc. Int. Conf., Arcata/Calif. 1986, Lect.
  Notes Math. 1370, 193-220 (1989).}, 1989.

\bibitem[Iwa12]{Iwase2012}
Norio Iwase.
\newblock Associahedra, multiplihedra and units in {$A_{\infty}$} form.
\newblock arXiv:1211.5741, 2012.

\bibitem[Mar04]{Markl04}
Martin Markl.
\newblock Homotopy algebras are homotopy algebras.
\newblock {\em Forum Math.}, 16(1):129--160, 2004.

\bibitem[Mer11]{merkulov11}
Sergei~A. Merkulov.
\newblock Operads, configuration spaces and quantization.
\newblock {\em Bull. Braz. Math. Soc. (N.S.)}, 42(4):683--781, 2011.

\bibitem[MSS02]{MSS02}
Martin Markl, Steve Shnider, and Jim Stasheff.
\newblock {\em Operads in algebra, topology and physics}, volume~96 of {\em
  Mathematical Surveys and Monographs}.
\newblock American Mathematical Society, Providence, RI, 2002.

\bibitem[MT11]{MuroTonks2011}
Fernando Muro and Andrew Tonks.
\newblock Unital associahedra.
\newblock arXiv:1110.1959, 2011.

\bibitem[Mur11]{Muro2011}
Fernando Muro.
\newblock Homotopy units in {A}-infinity algebras.
\newblock arXiv:1111.2723, 2011.

\bibitem[Now71]{nowlan}
Robert~A. Nowlan.
\newblock {$A\sb n$-actions on fibre spaces.}
\newblock {\em Math. J., Indiana Univ.}, 21:285--313, 1971.

\bibitem[Poi10]{poincare}
Henri Poincar{\'e}.
\newblock {\em Papers on topology}, volume~37 of {\em History of Mathematics}.
\newblock American Mathematical Society, Providence, RI, 2010.
\newblock {{\i}t Analysis situs} and its five supplements, Translated and with
  an introduction by John Stillwell.

\bibitem[Sta63]{Stasheff63}
James Stasheff.
\newblock Homotopy associativity of {$H$}-spaces. {I}, {II}.
\newblock {\em Trans. Amer. Math. Soc. 108 (1963), 275-292; ibid.},
  108:293--312, 1963.

\bibitem[Sta70]{jds:lnm}
J.~Stasheff.
\newblock {\em H-Spaces from a homotopy point of view}.
\newblock Springer-Verlag, Berlin, 1970.
\newblock Lecture Notes in Mathematics, Vol. 161.

\bibitem[Sug57]{sugawara:h}
M~Sugawara.
\newblock On a condition that a space is an {H}-space.
\newblock {\em Math. J. Okayama Univ.}, 6:109--129, 1957.

\bibitem[Tsu12]{tsutaya}
Mitsunobu Tsutaya.
\newblock Finiteness of {$A_n$}-equivalence types of gauge groups.
\newblock {\em J. Lond. Math. Soc. (2)}, 85(1):142--164, 2012.

\bibitem[Vog03]{Vogt2003}
R.~M. Vogt.
\newblock Cofibrant operads and universal {$E_\infty$} operads.
\newblock {\em Topology Appl.}, 133(1):69--87, 2003.

\end{thebibliography}
%


\def\cprime{$'$} \def\cprime{$'$} \def\cprime{$'$} \def\cprime{$'$}
  \def\cprime{$'$} \def\cprime{$'$} \def\cprime{$'$} \def\cprime{$'$}

\end{document}